\newcommand{\R}{\ensuremath{\mathbb{R}}}
\newcommand{\Z}{\ensuremath{\mathbb{Z}}}
\newcommand{\C}{\ensuremath{\mathbb{C}}}
\newcommand{\N}{\ensuremath{\mathbb{N}}}
\newcommand{\T}{\ensuremath{\mathbb{T}}}
\newcommand{\1}{\ensuremath{{\mathbf{1}}}}
\newcommand{\bfa}{\ensuremath{\mathbf{a}}}
\newcommand{\bfq}{\ensuremath{\mathbf{q}}}
\newcommand{\<}{\ensuremath{\lesssim}}
\newcommand{\eps}{\ensuremath{\varepsilon}}
\newcommand{\la}{\ensuremath{\lambda}}
\newcommand{\LL}{\ensuremath{\mathcal{L}}}
\newcommand{\what}{\ensuremath{\widehat{\omega_{\la_l}}}}
\newcommand{\ds}{\ensuremath{\widetilde{d\sigma}}}
\newcommand{\eq}{\begin{equation}}
\newcommand{\ee}{\end{equation}}
\newcommand{\p}{\ensuremath{\mathfrak{p}}}
\newcommand{\oddprime}{\mathfrak{p}}
\newcommand{\form}{Q}
\newtheorem{thm}{Theorem}
\newtheorem{lemma}{Lemma}
\newtheorem{prop}{Proposition}
\newtheorem{conj}{Conjecture}
\newtheorem{cor}{Corollary}
\newtheorem*{thma}{Theorem A (\cite{MSW})}
\newtheorem*{thmb}{Theorem B (\cite{Akos})}
\theoremstyle{definition}
\theoremstyle{remark}
\newtheorem{rem}{Remark}
\title{Bounds for Lacunary maximal functions given by Birch--Magyar averages}
\author{Brian Cook}
\address{
    School of Mathematics
	\\	Kent State University
	\\	USA
}
\email{briancookmath@gmail.com}
\author{Kevin Hughes}
\address{
    School of Mathematics
	\\	The University of Bristol
	\\	Howard House
	\\	Queens Avenue
	\\	Bristol, BS8 1TW
	\\	UK
	\\ and the Heilbronn Institute for Mathematical Research, Bristol, UK
}
\email{khughes.math@gmail.com}
\begin{document}

\begin{abstract}
We obtain positive and negative results concerning lacunary discrete maximal operators defined by dilations of sufficiently nonsingular hypersurfaces arising from Diophantine equations in many variables. 
Our negative results show that this problem differs substantially from that of lacunary discrete maximal operators defined along a nonsingular hypersurface. 
Our positive results are improvements over bounds for the corresponding full maximal functions which were initially studied by Magyar. 

In order to obtain positive results, we use an interpolation technique of the second author to reduce problem to a maximal function of main terms. 
The main terms take the shape of those introduced in work of the first author, which is a more localized version of the main terms that appear in work of Magyar. 
The main ingredient of this paper is a new bound on the main terms near $\ell^1$. 
For our negative results we generalize an argument of Zienkiewicz. 
\end{abstract}


\maketitle


\section{Introduction}

\subsection{Background}

The discrete spherical averages are defined for a function $f$ on the integer lattice $\Z^n$ as 
\[
S^{(n)}_\la f(y) 
= 
\frac{1}{R_n(\la)}\sum_{|x|^2=\la} f(y-x)
\]
where $|x|^2=x_1^2+x_2^2+...+x_n^2$ and $R_n(\la)=\#\{x\in\Z^n:|x|^2=\la\}$. 
Magyar \cite{Mag} and  Magyar, Stein, and Wainger \cite{MSW} considered the question of  $\ell^p$-boundedness of the discrete spherical maximal operators 
\begin{equation}\label{1.1}
S^{(n)}_*f(y)=\sup_{\la\in\mathbb{N}} |S_\la f(y)|.
\end{equation}
A complete result on this is given in the latter work with a subsequent restricted weak-type endpoint result given by \cite{Ionescu}.

\begin{thma}\label{msw}
The operator $S^{(n)}_*$ is bounded on $\ell^p$ if and only if $n\geq 5$ and $p > \frac{n}{n-2}$.
\end{thma}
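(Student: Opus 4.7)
The plan is to prove both directions of the equivalence separately. For the \emph{necessity} direction, one establishes that $S^{(n)}_*$ cannot be bounded on $\ell^p$ unless both $n \geq 5$ and $p > n/(n-2)$. The classical test input is $f = \delta_0$, for which $S^{(n)}_\la \delta_0(y) = R_n(\la)^{-1} \1_{|y|^2 = \la}$, so $\sup_\la S^{(n)}_\la \delta_0(y) = R_n(|y|^2)^{-1}$. For $n \geq 5$ one has $R_n(\la) \asymp \la^{n/2-1}$ (with the singular series bounded away from zero), and hence
\[
\|S^{(n)}_* \delta_0\|_{\ell^p}^p \asymp \sum_{y \neq 0} |y|^{-p(n-2)},
\]
which is finite iff $p(n-2) > n$. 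For $n \leq 4$ the count $R_n(\la)$ fluctuates too wildly (it vanishes or drops to essentially $O(\la^\epsilon)$ on infinite subsequences of $\la$), and by choosing suitable $\la$ one can exhibit inputs—for instance, characteristic functions of a ball or a single point—on which $S^{(n)}_*$ cannot be controlled in $\ell^p$ for any $p < \infty$.

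For the \emph{sufficiency} direction, the core idea is a Hardy--Littlewood circle method decomposition of the Fourier multiplier of $S^{(n)}_\la$. Writing
\[
\widehat{S^{(n)}_\la}(\xi) = R_n(\la)^{-1} \sum_{|x|^2 = \la} e(-x\cdot \xi),
\]
one approximates this exponential sum on major arcs $\xi \approx \bfa/\bfq$ by a product of a normalized Gauss sum in $\bfa/\bfq$ and a rescaled continuous sphere multiplier $\ds$. Summing the major arc contributions over admissible $(\bfa,\bfq)$ with $\bfq \leq Q(\la)$ produces a Magyar-type main term $M^{(n)}_\la$, and the remainder $E^{(n)}_\la = S^{(n)}_\la - M^{(n)}_\la$ collects the minor arc and tail contributions.

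The maximal operator for the main terms is handled by the Magyar--Stein--Wainger transference principle: $M^{(n)}_*$ factors (via Fourier multipliers) as a composition of a purely arithmetic operator, built from the Gauss sums and supported on rational points $\bfa/\bfq$, and a lifted version of the continuous spherical maximal operator. Stein's theorem yields the bound for the continuous piece on $L^p(\R^n)$ for $p > n/(n-1)$, transference lifts this to $\ell^p$ on $q\Z^n$, and summation over denominators $\bfq$ is controlled by standard estimates on the average size of the Gauss sums. The error $E^{(n)}_*$ is treated by square-function methods: Weil-type bounds on the relevant Kloosterman/Gauss sums give an $\ell^2$ bound with a power saving in $\bfq$, which is summable precisely when $n \geq 5$, and interpolation with the trivial $\ell^\infty \to \ell^\infty$ bound yields $\ell^p$ estimates for all $p > 2$.

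The main obstacle is achieving the sharp endpoint exponent $p > n/(n-2)$, rather than merely some $p > 2$ obtained from crude $L^2$--$L^\infty$ interpolation. This requires a finer decomposition of the main term into dyadic pieces parametrized by the denominator $\bfq$, together with sharp $\ell^p \to \ell^p$ bounds on each piece that exploit both the arithmetic gain from Gauss sum cancellation and the analytic gain from the continuous spherical maximal function; one then interpolates these refined bounds against the $\ell^2$ bound to recover the full range $p > n/(n-2)$. The threshold $n \geq 5$ is forced precisely because only then do the arithmetic factors behave regularly enough for both the singular series to be bounded below (giving the $R_n(\la) \gs \la^{n/2-1}$ lower bound used in necessity) and for the Gauss sum averages to be summable (closing the sufficiency argument).
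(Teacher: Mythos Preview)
The statement you are addressing is Theorem~A, which in this paper is a \emph{cited} background result attributed to Magyar--Stein--Wainger \cite{MSW}; the present paper does not supply its own proof. Consequently there is no ``paper's own proof'' to compare your proposal against. Your sketch is a reasonable high-level summary of the argument in \cite{MSW}: the necessity direction via $f=\delta_0$ and the asymptotic $R_n(\la)\asymp \la^{n/2-1}$ for $n\ge 5$, and the sufficiency direction via a major-arc decomposition of the multiplier, transference from Stein's continuous spherical maximal theorem for the main terms, and $\ell^2$ estimates with power saving for the error, followed by interpolation over dyadic blocks in the denominator $q$.

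One small point worth tightening if you want this sketch to accurately reflect \cite{MSW}: the error term $E_\la^{(n)}$ in their argument is not handled by interpolating an $\ell^2$ bound against a \emph{trivial} $\ell^\infty$ bound. Rather, the sharp exponent $n/(n-2)$ comes entirely from the main-term analysis: one decomposes the main term dyadically in $q$, obtains for each dyadic piece an $\ell^p$ bound (all $p>\tfrac{n}{n-1}$) of size $O(1)$ via transference and an $\ell^2$ bound of size $O(2^{-j(n/2-2)})$ from Gauss sum decay, and interpolation between these two gives summability precisely for $p>\tfrac{n}{n-2}$. The minor-arc error is shown to satisfy a uniform $L^\infty(\T^n)$ bound of size $\la^{-\delta}$ for some $\delta>0$, which suffices (after a dyadic-in-$\la$ square-function argument) on $\ell^2$, and one then interpolates against the trivial $\ell^1\to\ell^\infty$ bound for each individual average to push this to all $p>1$; the error term does not constrain the final range. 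Your proposal conflates these two interpolations slightly.
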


While this result cannot be improved, several interesting, related problems - concerning scenarios where the definition \eqref{1.1} is modified in some way - remain open. 
One difficult problem concerns the case when $n=4$ where the supremum taken only over odd integers $\la>0$; see \cite{Magyar:distribution,Hughes:thesis,Hughes:improving} for more information. 
The focus of this paper is on another problem which asks for the correct range of $\ell^p$-boundedness for maximal operators obtained from restricting the supremum in \eqref{1.1} to a fixed subsequence. 
To be precise, given a subsequence $\mathcal{L} = \{\la_1,\la_2,...\}\subset\mathbb{N}$, we are interested in the discrete spherical maximal function over $\mathcal{L}$ defined as
\begin{equation}\label{1.2}
S^{(n)}_{\mathcal{L},*}f(y) 
:= 
\sup_{\la\in\LL} |S_\la f(y)|.
\end{equation}

The motivation for these operators lies in Euclidean harmonic analysis where they have been extensively studied. 
In particular, a result of Calder\'on \cite{Calderon}, and independently of Coifman--Weiss \cite{CW}, states that the maximal function of Euclidean spherical averages over a lacunary subsequence of $\R$ is bounded on $L^p(\R^n)$ for all $1<p \leq \infty$ when $n \geq 2$. 
Recall that a sequence $\mathcal{L} = \{\lambda_1 < \lambda_2 < \dots \} \subset \R_{>0}$ is \emph{lacunary} if there is a constant $c>1$ such that $\la_{i+1}/\la_i>c$ for all $i$. 
This is an improvement over the boundedness of Stein's full spherical maximal function which is bounded on $L^p(\R^n)$ precisely in the range $p > \frac{n}{n-1}$ and $n \geq 2$; see \cite{Stein:spherical,Bourgain:circular}. 
The main conjecture in this Euclidean setting is that the lacunary spherical maximal function is weak-type (1,1). 
This conjecture remains open despite much interest. 
For instance, see \cite{Christ_weak11,STW_pointwise_lacunary,CK_lacunary}.

In analogy with these Euclidean harmonic analysis results, we expect that a discrete spherical maximal function $S^{(n)}_{\mathcal{L},*}$ over a lacunary subsequence $\mathcal{L} \subset \N$ is bounded on a larger range of $\ell^p(\Z^n)$ subspaces than the range of the full maximal function given by Theorem A. 
Naively, one would expect that $S^{(n)}_{\mathcal{L},*}$ is bounded on $\ell^p(\Z^n)$ for all $p>1$. 
However, Zienkiewicz showed that this may fail. 
Initially unaware of these limitations, the authors showed that $S^{(n)}_{\mathcal{L},*}$ may indeed be bounded on a larger range of $\ell^p(\Z^n)$ spaces for certain types of lacunary subsequences; see \cite{Hughes:sparse,Cook:sparse} for more detail. 
Combining ideas from the authors' works, \cite{KLM} recently obtained a result which only requires $\mathcal{L}$ to be lacunary. 
In particular \cite{KLM} showed that for each lacunary sequence the lacunary discrete spherical maximal function $\mathcal{L} \subset \N$ lacunary, $S^{(n)}_{\mathcal{L},*}$ is bounded on $\ell^p(\Z^n)$ for $p>\frac{n-2}{n-3}$ and $n \geq 5$. 

The discrete spherical maximal function and its restriction to subsequences lie in a more general context introduced by Magyar. 
In \cite{Akos} Magyar extended Theorem~A of Magyar--Stein--Wainger to sufficiently nice positive definite hypersurfaces; we will describe precisely what we mean below. 
Magyar only recorded the $\ell^2(\Z^n)$ boundedness of the associated maximal functions. 
However his methods allow one to obtain $\ell^p(\Z^n)$ boundedness for a range of $p$ slightly below $\ell^2$. 
Unfortunately, determining the sharp range of $\ell^p(\Z^n)$ boundedness in the range of $p$ or $n$ is far too difficult a problem with current methods. 
To convey its difficulty note that one first requires a complete resolution of Waring's problem.
Instead, the goal of the current work is to obtain a general result for lacunary discrete maximal functions associated to Diophantine equations in many variables. 
A particular case of our main result yields a distinct proof of the result in \cite{KLM}. 

\begin{rem}
The above problems and results - as well as those below - concern averages \emph{transverse} to a given hypersurface, and differ substantially from the indefinite case where one averages \emph{along a fixed hypersurface}, e.g. Bourgain's averages along the squares in \cite{Bourgain:squares,Boup>1}. 
In the indefinite case, the full maximal function along the hypersurface is equivalent - up to a factor of a power of 2 - to the maximal function along the dyadic sequence $2^j$ for $j \in \N$ along the hypersurface. 
For instance, the second author \cite{Cook:Birch} previously obtained $\ell^p(\Z^n)$ boundedness of the corresponding lacunary maximal functions for all $p>1$. 

The endpoint behavior at $p=1$ is subtle in all of these problems. 
In some instances, see \cite{UZ,Christ_weak_discrete,Mirek_weak11}, the weak-type (1,1) estimate holds while in other cases, see \cite{BM_divergent_squares,LaVictoire_universally_bad}, the weak-type (1,1) estimate fails. 
We will see below that the behavior of the maximal functions considered here (which are given by transverse dilations of a hypersurface) possess significantly more subtle phenomena than that of maximal operators defined along a hypersurface.
\end{rem}

\subsection{The main results} 

Our results lie in the Magyar's framework in \cite{Akos} which generalized Theorem~A to averages over more general families of surfaces given by Diophantine equations in many variables. 
We describe this framework before stating our results. 

Let $Q$ be a homogeneous integral  polynomial (i.e., an integral form)  of degree $k$ in $n$ variables and define the Birch rank of $Q$, denoted $\mathcal{B}(Q)$, to be the co-dimension of the complex singular locus  $\{z\in\C^n:\partial_{z_1}Q(z)=...=\partial_{z_n}Q(z)=0\}$ of $Q$.  For a given test function $\psi$ with $0\leq\psi\leq1$ we define the counting function\[
r_{Q,\psi}(\la):=\sum_{Q(x)=\la} \psi(x/\la^{1/k}).\]
From \cite{Bi} we have that  
\[
0\leq r_{Q,\psi}(\la)\leq\sum_{Q(x)=\la}\1_{x\in\la^{1/k} \text{supp}(\psi)}\<\la^{n/k-1}\]
on the condition $\mathcal{B}(Q)>(k-1)2^k$. 
An integral form $Q$ satisfying this rank condition, together with the condition that $Q(x)=1$ has a nonsingular solution $x\in \R^n$ on the support of a given test function $\psi$, is said to be $\psi$-$regular$. 
If $Q$ is $\psi$-regular for a given $\psi$, as is  shown in \cite{Akos}, there is an infinite arithmetic progression $\Gamma$ of \emph{regular values} such that $\la\in\Gamma$ has 
\[
\la^{n/k-1} 
\lesssim 
r_{Q,\psi}(\la).
\]
The precise set of regular values is actually independent of $\psi$ (up to the first few terms possibly), although the lower bound here depends on the existence of a nonsingular solution.
Henceforth, for a $\psi$-regular form $Q$, let $\Gamma$ denote such an infinite arithmetic progression. 

For $\la\in \Gamma$ we define the averages
\[
A^{(Q,\psi)}_\la f(y)=\frac{1}{r_{Q,\psi}(\la)}\sum_{Q(x)=\la}\psi(x/\la^{1/k}) f(y-x).
\]
These averages appear more general than the averages considered in \cite{Akos}, but the methods in \cite{Akos} extend to include these averages. 
Our reason for including the cutoff is that we wish to include cases where the surface in $\R^n$ given by the equation $Q(x)=1$ is not compact. 

For the full maximal function 
\[
A^{(Q,\psi)}_*f(y)=\sup_{\la\in\Gamma} |A^{(Q,\psi)}_\la f(y)|
\]
over the variety $\{ Q = 1\}$ we have the following result due to Magyar. 
\begin{thmb}
Let $Q$ be a $\psi$-regular integral form of degree $k>1$ with a sequence of regular values $\Gamma$. If $A^{(Q,\psi)}_*$ is defined as above then we have 
\[
\|A^{(Q,\psi)}_*\|_{\ell^2\to\ell^2} 
< 
\infty.
\]
\end{thmb}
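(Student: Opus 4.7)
The plan is to adapt the Magyar--Stein--Wainger template to Birch's form of the circle method. First, pass to Fourier space: writing $A^{(Q,\psi)}_\la f = K_\la \ast f$ produces a multiplier
\[
m_\la(\xi) = \frac{1}{r_{Q,\psi}(\la)} \sum_{Q(x) = \la} \psi(x/\la^{1/k})\, e(x \cdot \xi),
\]
and the hypothesis $\mathcal{B}(Q) > (k-1) 2^k$ is exactly the input needed to run Birch's circle method. The output is a decomposition $m_\la = M_\la + E_\la$ for some small $\de > 0$, where the minor-arc error satisfies $\|E_\la\|_{L^\infty(\T^n)} \lesssim \la^{-\de}$ and the main term is localized near Farey fractions of denominator $q \leq \la^\de$:
\[
M_\la(\xi) = \sum_{q \leq \la^\de} \sum_{\substack{a \in (\Z/q\Z)^n \\ (a,q) = 1}} G_Q(a,q,\la)\, \chi(\xi - a/q)\, \ds(\xi - a/q).
\]
Here $G_Q$ is a normalized complete Gauss sum satisfying $|G_Q(a,q,\la)| \lesssim q^{-\sigma}$ for some $\sigma > 0$ (with $\sigma$ growing with $\mathcal{B}(Q)$), $\chi$ is a smooth cutoff to a ball of radius $\la^{-1/k + \de}$, and $\ds(\zeta)$ is a smooth, rescaled-at-scale-$\la^{1/k}$ approximation to the Fourier transform of the surface measure on $\{Q = 1\} \cap \mathrm{supp}(\psi)$.

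The minor-arc term is handled by Plancherel together with a standard dyadic block argument: within a block $\la \sim 2^j$ one bounds the supremum by a square function whose $\ell^2$ norm, via Plancherel and $\|E_\la\|_\infty \lesssim 2^{-j\de}$, sums geometrically in $j$. For the main term, I would invoke Magyar's transference (sampling) principle: each denominator-$q$ piece of $M_\la$ is the periodization of a continuous multiplier on $\R^n$, so its $\ell^2(\Z^n)$ operator norm (taken with supremum over $\la$) is dominated by the $L^2(\R^n)$ norm of the corresponding continuous maximal operator. That operator is controlled by Stein's spherical maximal theorem applied to the surface $\{Q = 1\} \cap \mathrm{supp}(\psi)$, which by $\psi$-regularity contains a nonsingular point and hence locally has nonvanishing Gaussian curvature.

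The main obstacle is the interplay between the sampling loss in $q$ and the Gauss-sum decay. The sampling step costs a power of $q$ in the $\ell^2$ transference, but Birch's bound on complete exponential sums yields $\sigma$ that grows with the Birch rank, and the rank threshold $\mathcal{B}(Q) > (k-1)2^k$ is precisely calibrated so that the product $q^{-\sigma} \cdot (\text{sampling cost})$ is summable over dyadic ranges $q \sim 2^j$. A secondary technical point is that the $q$-cutoff $\la^\de$ depends on $\la$; however, contributions from $q > \la^\de$ are already absorbed into the error $E_\la$, so once the minor-arc analysis is in hand one may freely extend the $q$-sum in $M_\la$ to all of $\N$. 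Finally, the non-compactness of $\{Q = 1\}$ causes no difficulty because $\psi$ confines all averaging to a fixed compact region of the variety, where the Euclidean maximal theory applies verbatim.
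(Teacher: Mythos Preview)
The paper does not itself prove Theorem~B; it is quoted from Magyar \cite{Akos}. Your outline is in the right spirit and broadly matches Magyar's argument, but there is a genuine gap in your treatment of the continuous maximal operator. You claim that $\psi$-regularity guarantees a nonsingular point on $\{Q=1\}\cap\mathrm{supp}(\psi)$ ``and hence locally has nonvanishing Gaussian curvature,'' then invoke Stein's spherical maximal theorem. The implication is false: nonsingularity means $\nabla Q\neq 0$, which makes the level set a smooth hypersurface, but says nothing about curvature. For the $k$-sphere $Q(x)=x_1^k+\cdots+x_n^k$ with $k\geq 4$ even, the point $(1,0,\dots,0)$ is nonsingular, yet the Hessian restricted to the tangent space vanishes there, so the Gaussian curvature is zero. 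Stein's theorem does not apply to such surfaces, and for a general Birch form there is no reason to expect nonvanishing curvature anywhere on $\mathrm{supp}(\psi)$.

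What Magyar actually uses (see Lemma~6 of \cite{Akos}, recorded in this paper as Lemma~\ref{akos}) is that Birch's oscillatory-integral estimates, driven by the large Birch rank rather than by curvature, give $|\ds(\xi)|\lesssim (1+|\xi|)^{-K}$ for some $K>0$ depending on $\mathcal{B}(Q)$ and $k$. With any such decay in hand, the continuous lacunary/dyadic maximal operator is bounded on $L^2(\R^n)$ by a straightforward square-function argument (or by the Duoandikoetxea--Rubio de Francia theorem \cite{DR86}), and no curvature hypothesis is needed. Two smaller points: your main-term formula conflates the scalar $a\in U_q$ from the one-dimensional circle method with the vector $\bfa\in Z_q^n$ localizing $\xi$ (compare \eqref{2.1}); and your discussion of ``sampling loss in $q$'' is off---at $\ell^2$ the Magyar--Stein--Wainger transference costs nothing, and the factor of $q^2$ you must beat comes from summing over $a\in U_q$ and over $q$ in a dyadic block, which is why one needs $\alpha_Q>2$ in \eqref{eq:alpha}.
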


The correct exponent for $\ell^p$-boundedness in this result remains open. 
Considering the maximal function of a delta function in $\Z^n$ shows that the bound on the exponent in Theorem A is sharp. 
Moreover, the same example shows that $A^{(Q,\psi)}_*$ is unbounded on $\ell^p(\Z^n)$ for $p \leq \frac{n}{n-k}$. 
Most likely, $p > \frac{n}{n-k}$ is the sharp range of $\ell^p(\Z^n)$ boundedness for sufficiently large $n$ with respect to the degree $k$ of the form $Q$. 
As noted in \cite{Hughes:improving}, and independently by the first author, the range of $\ell^p(\Z^n)$ boundedness in Theorem~B extends slightly to $p$ below $2$, but nowhere near to the critical exponent of $n/(n-k)$. 
Substantial improvements to the range of $\ell^p(\Z^n)$ boundedness for $A^{(Q,\psi)}_*$ remain intractable at this time due to the present limitations of the circle method. 
Instead we are interested in bounds for lacunary maximal operators
\[
A^{(Q,\psi)}_{\LL,*}f(y)
:= 
\sup_{\la_l\in\LL} |A^{(Q,\psi)}_{\la_l}f(y)| = 
\sup_{\la_l\in\LL}  |\frac{1}{r_{Q,\psi}(\la_l)} \sum_{Q(x)=\la_l}\psi(x/\la_l^{1/k})f(y-x)|
\]
where $\LL$ is a lacunary subsequence of $\Gamma$. 

For an integral polynomial $Q$ in $n$ variables we define the normalized Weyl sums 
\[
F_q^{(Q)}(a,\bfa) 
= 
q^{-n}\sum_{s\in Z_q^n}e(Q(s)\,a/q+s\cdot \bfa/q)
\]
where $a\in Z_q$ and $\bfa\in Z_q^n$. 
Here we introduce the notations $e(z)=e^{2\pi iz}$, $Z_q=\Z/q\Z$, and $U_q=Z_q^*$ (with the understanding that $U_1=Z_1=\{0\}$, allowing the notation to keep track of any major arc around $0$). 
For the statement of our main result we need the quantity 
\begin{equation}\label{eq:alpha}
\alpha_Q 
:= 
\sup\{\beta \geq 0 : \sup_{a\in U_q}\sup_{\bfa\in Z^n_q}|F^{(Q)}_q(a,\bfa)|\< q^{-\beta}\}.
\end{equation}
An important property of $Q$ being regular is that it forces the bound $\alpha_Q>2$; this is necessary in our approach. 
In Corollary~2 of \cite{Akos} Magyar proves that $\alpha_Q \leq \frac{\mathcal{B}(Q)}{2^{k-1}(k-1)}$ for $\mathcal{B}(Q) > 2^k(k-1)$. 
We make this assumption on the degree and dimension implicitly throughout. 
This bound is not sharp in general; for instance, one may improve it significantly when $Q$ is a diagonal form. 

Using the approach in \cite{Hughes:restricted} one may prove that for any lacunary subsequence $\LL$ of $\Gamma$, the lacunary maximal function $A^{(Q,\psi)}_{\LL,*}$ is bounded on $\ell^p(\Z^n)$ for $p > \frac{\alpha_Q}{\alpha_Q-1}$. 
This improves upon the range implicit in \cite{Akos}. 
In this work we obtain the following further improvement. 
(Note that $\frac{2\alpha_Q-2}{2\alpha_Q-3} < \frac{\alpha_Q}{\alpha_Q-1}$ precisely when $\alpha_Q>2$.) 

\begin{thm}\label{1}
Let $Q$ be a $\psi$-regular integral form of degree $k>1$  with a sequence $\Gamma$ of regular values. If  $\LL=(\la_l)_{l=1}^\infty\subseteq \Gamma$ is a lacunary sequence, then the maximal operators $A^{(Q,\psi)}_{\LL,*}$ are bounded on $\ell^p(\Z^n)$ for \(p > \frac{2\alpha_Q-2}{2\alpha_Q-3}\)
and the dimension $n$ is sufficiently large.
\end{thm}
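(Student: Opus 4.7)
The plan is a three-step reduction: (i) a Hardy--Littlewood--Birch circle-method decomposition of the multiplier of $A^{(Q,\psi)}_\la$ into a main term and a minor-arc error, (ii) disposal of the error by a lacunarity-plus-interpolation argument following the second author's earlier work, which reduces the theorem to a bound for the maximal function of the main terms, and (iii) a new near-$\ell^1$ estimate for that main-term maximal function, which is the technical heart of the paper.

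For (i), following \cite{Akos} with the first author's localization from \cite{Cook:Birch}, we write $\widehat{A^{(Q,\psi)}_\la} = M_\la + E_\la$, where $M_\la$ collects the major-arc contributions around fractions $a/q$, each weighted by a Weyl sum $F^{(Q)}_q(a,\bfa)$ and localized by a smooth bump against a continuous surface-measure piece, and $E_\la$ is a minor-arc error satisfying $\|E_\la\|_{\ell^2 \to \ell^2} \< \la^{-\de}$ for some $\de > 0$ (guaranteed by $\al_Q > 2$, which is built into $\psi$-regularity). For (ii), since $\LL$ is lacunary the maximal function of the errors is controlled on $\ell^2$ by a square-sum,
\begin{equation*}
\Big\| \sup_{\la_l \in \LL} |E_{\la_l} f| \Big\|_{\ell^2}^2
\leq \sum_l \|E_{\la_l} f\|_{\ell^2}^2
\< \|f\|_{\ell^2}^2,
\end{equation*}
and interpolating this against a trivial $\ell^\infty$ bound via the smoothing-and-interpolation technique of the second author (inserting a scale-adapted Hardy--Littlewood average before Riesz--Thorin) extends the $\ell^p$ control of $\sup_l |E_{\la_l} f|$ to every $p > 1$. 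This reduces Theorem~\ref{1} to the main-term estimate
\begin{equation*}
\Big\| \sup_{\la_l \in \LL} |M_{\la_l} f| \Big\|_{\ell^p} \< \|f\|_{\ell^p}, \qquad p > \tfrac{2\al_Q - 2}{2\al_Q - 3}.
\end{equation*}

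For (iii), we decompose $M_\la$ dyadically in the denominator $q$, invoke $|F^{(Q)}_q(a,\bfa)| \< q^{-\al_Q + \eps}$, and use standard transference to pass from the rational-modulation pieces on $\Z^n$ to periodic operators on $(\Z/q\Z)^n$ convolved with the smoothed continuous average. The lacunarity of $\LL$ lets us replace the supremum over $l$ by an $\ell^2$ square-function in the continuous component, while the Weyl-sum decay provides summability in $q$. Combining these ingredients yields a near-$\ell^1$ bound for $\sup_l |M_{\la_l} f|$ with polynomial loss in $q$, and interpolating against the clean $\ell^2$ bound for the main terms produces exactly the range $p > \frac{2\al_Q-2}{2\al_Q-3}$. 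The gain over the $p > \frac{\al_Q}{\al_Q-1}$ range obtained from $\ell^\infty$--$\ell^2$ interpolation alone reflects roughly a doubling of the effective Weyl-sum exponent $\al_Q$.

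The principal obstacle is the near-$\ell^1$ bound for the main-term maximal function. The rational-modulation pieces are not $\ell^p$-bounded uniformly in $q$ when $p$ is close to $1$, and the continuous surface pieces are spread on the scale $\la^{1/k}$, so a naive absolute-value bound fails immediately. Controlling the interplay between the arithmetic decay $q^{-\al_Q}$ and the $q$-growth that transference introduces in $\ell^p$ for $p < 2$ is delicate, and it is precisely this balance that forces the ``sufficiently large $n$'' hypothesis in the statement of Theorem~\ref{1}.
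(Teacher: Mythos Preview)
Your outline has the right three-part architecture, and you correctly flag step (iii) as the crux. But the content you supply for (iii) does not close the gap, and your account of (ii) differs from the paper in a way that matters.

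On (ii): you claim the error maximal function is bounded on $\ell^p$ for \emph{every} $p>1$ via a ``trivial $\ell^\infty$ bound'' plus smoothing-and-interpolation. There is no such trivial bound: $E_{\la_l}=A_{\la_l}-M_{\la_l}$, and controlling $M_{\la_l}$ on $\ell^p$ for $p$ near $1$ is exactly the unresolved problem. The paper instead bootstraps (Lemma~\ref{errors}): assuming $M_*$ is bounded on $\ell^{p_0}$, one deduces that each $E_{\la_l}$ is bounded on $\ell^{p_0}$ (since $A_{\la_l}$ is), interpolates against the $\ell^2$ decay $\|e_{\la_l}\|_\infty\lesssim\la_l^{-\delta}$, and uses lacunarity to sum. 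So the error is disposed of \emph{conditionally} on the main-term bound, not independently.

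On (iii): your description---Weyl-sum decay for summability in $q$, and ``lacunarity lets us replace the supremum by an $\ell^2$ square-function in the continuous component''---conflates the $\ell^2$ and near-$\ell^1$ mechanisms. Weyl-sum decay is what drives the $\ell^2$ bound $\|M_{*,j}\|_{\ell^2\to\ell^2}\lesssim 2^{(2-\alpha_Q+\eps)j}$; near $\ell^1$ the paper proves $\|M_{*,j}\|_{\ell^p\to\ell^p}\lesssim j^2 2^j$, which \emph{grows} in $j$ and uses no Weyl-sum cancellation. A square-function replacement of the supremum is an $\ell^2$ device and gives nothing near $\ell^1$. More seriously, you do not address the actual obstruction: the characters $e(-a\la_l/q)$ couple the arithmetic modulation to the lacunary parameter $\la_l$, so one cannot simply ``transfer to periodic operators on $(\Z/q\Z)^n$'' and then take a supremum in $l$ on the continuous side. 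The paper's resolution is a M\"obius-inversion step (Section~\ref{section:Mobius}) that rewrites $m_{\la_l,j}$ as $\sum_d C_j(d)\,\Omega_{\la_l,j,d}$ with \emph{completed} sums over $a\in Z_d$; this allows a factorization $\Omega_{\la_l,j,d}=v_{\la_l,d}\cdot u_{\la_l,d,j}$ in which the $\la_l$-dependence of the arithmetic factor is reduced to a congruence class, and the continuous supremum is then handled by a \emph{tail} Euclidean maximal inequality $\la_l\geq d^{C(p)}$ (Propositions~\ref{real} and~\ref{discrete}) derived from \cite{DR86}. Lacunarity enters only to bound the number of head terms $\la_l<d^{C(p)}$ by $O(\log d)$. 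None of this machinery appears in your sketch, and without it the near-$\ell^1$ bound does not go through.
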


In tandem with Theorem~\ref{1}, we generalize Zienkiewicz's probabilistic construction for spheres in \cite{Zienkiewicz} to show that there exists lacunary sequences $\LL$ for which  $A^{(Q,\psi)}_{\LL,*}$ is unbounded near $\ell^1(\Z^n)$. 
\begin{thm}\label{theorem:lowerbound}
Let $Q$ be a $\psi$-regular integral form of degree $k>1$  with a sequence $\Gamma$ of regular values. If $p \in (1, \frac{n}{n-1})$, then there exists a lacunary sequence $\LL$ of $\N$ such that $A^{(Q,\psi)}_{\LL,*}$ is unbounded on $\ell^p(\Z^n)$. 
\end{thm}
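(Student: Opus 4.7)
The plan is to adapt the randomized construction of Zienkiewicz \cite{Zienkiewicz}, originally carried out in the spherical case $Q(x) = |x|^2$, to arbitrary $\psi$-regular forms. The argument splits into a super-lacunary gluing reduction, followed by a single block construction within which the maximal function exhibits growth.

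For the reduction, one observes that it suffices to show that for every $M \geq 1$ there exist a finite lacunary block $\la_1 < \cdots < \la_N$ in $\Gamma$ (with ratios $\la_{l+1}/\la_l$ bounded below by a fixed constant $c > 1$) and a finitely supported function $f_M$ on $\Z^n$ satisfying
\[
\Bigl\| \sup_{1 \leq l \leq N} |A^{(Q, \psi)}_{\la_l} f_M| \Bigr\|_{\ell^p(\Z^n)} \;\geq\; M \cdot \|f_M\|_{\ell^p(\Z^n)}.
\]
Concatenating such blocks at widely separated centers, with the scale-range of each successive block vastly larger than the previous, yields a single lacunary sequence $\LL \subset \N$ for which $A^{(Q, \psi)}_{\LL, *}$ is unbounded on $\ell^p(\Z^n)$: the individual block contributions have essentially disjoint supports, so their $\ell^p$-norms add.

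For the single-block construction, we take $f_M = \1_E$, where $E$ is a random subset of $\bigsqcup_{l=1}^N \Sigma^{\psi}_{\la_l}$ -- here $\Sigma^{\psi}_{\la_l} := \{x \in \Z^n : Q(x) = \la_l\} \cap \la_l^{1/k} \operatorname{supp}(\psi)$ -- with each $x$ included in $E$ independently with probability $q_l \in (0,1)$. The $q_l$'s are parameters to be optimized. The expected $\ell^p$-ratio is governed by the correlation functions
\[
N_{\la_l}(y) \;:=\; |\Sigma^{\psi}_{\la_l} \cap (y - \Sigma^{\psi}_{\la_l})|,
\]
whose moments control the expected $p$-th power of $\|A^{(Q, \psi)}_{\la_l} \1_E\|_{\ell^p}$, while the $q_l$'s control $\mathbb{E}|E|$. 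A standard concentration argument on the associated Bernoulli sums, combined with the near-disjointness (for lacunary scales) of the supports of the individual $A^{(Q, \psi)}_{\la_l} \1_E$, converts the expected lower bound into a deterministic one for some realization of $E$.

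The main obstacle is sufficiently sharp quantitative control on $N_{\la_l}(y)$, i.e., the number of representations $y = a - b$ with $a, b \in \Sigma^{\psi}_{\la_l}$. Applying the Birch--Hardy--Littlewood circle method as in \cite{Akos} and \cite{Bi} to the coupled system $\{Q(a) = Q(b) = \la_l,\, a - b = y\}$ yields an asymptotic formula for $N_{\la_l}(y)$ in terms of a singular series and a singular integral, the latter a smooth measure concentrated on a codimension-$2$ subvariety of $\R^n$. The resulting dimension count, together with the optimization of the parameters $q_l$ and $N$ against the $\ell^p$-norm constraint, is precisely what forces the critical exponent $p < n/(n-1)$ claimed in the statement.
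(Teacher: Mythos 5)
Your proposal attempts to directly import Zienkiewicz's random-subset argument (random indicator of a subset of the level sets $\Sigma^\psi_{\la_l}$, controlled by the correlation functions $N_{\la_l}(y)$), whereas the paper's proof is actually \emph{deterministic} and does not run through correlation estimates at all. The paper fixes an odd prime $\p$ and a modulus $\p^J$, and for each residue class $\vector{b}\in Z_{\p^J}^n$ it uses Davenport's Lipschitz principle (\cite{Davenport:Lipschitz}, recorded as Proposition~\ref{proposition:weakLipschitz}) to find, by a pigeonhole over a dyadic window, a value $\la(\vector{b})$ on which the level set contains $\gtrsim \p^{-J(n-1)}\la^{n/k-1}$ points congruent to $\vector{b}$. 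Ordering these $\la$'s over widely separated dyadic scales gives a lacunary $\LL$; the test function is the indicator of the sublattice $(\p^J\Z\cap[-T,T])^n$, and the blow-up $\p^{J(n/p-(n-1))}$ falls out of a direct $\ell^p$ count as $J\to\infty$, yielding the $p<n/(n-1)$ threshold. No randomization, no concentration inequality, no circle method for coupled systems.

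The gap in your route is concrete. To control $N_{\la_l}(y)=|\Sigma^\psi_{\la_l}\cap(y-\Sigma^\psi_{\la_l})|$ via Birch--Magyar, you must run the circle method on the \emph{system} $\{Q(a)=\la,\ Q(a-y)=\la\}$, or equivalently on the pair $(Q(a),\,Q(a)-Q(a-y))$. For $k\geq3$ the second polynomial is non-homogeneous of degree $k-1$ in $a$ (with $y$-dependent coefficients), so this is not a Birch system of two forms, and the relevant Birch rank of the pair is not controlled by $\mathcal{B}(Q)$ alone; it can degenerate badly for special $y$, and even generically there is nothing in the hypotheses of the theorem guaranteeing the rank needed for an asymptotic. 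You also have not verified that the resulting ``codimension-$2$'' heuristic actually produces the exponent $n/(n-1)$, as opposed to an exponent depending on $k$; in the paper's argument the exponent comes from the interplay between the lattice $(\p^J\Z)^n$ and the box $[-T,T]^n$, which is manifestly degree-independent. Finally, the block-gluing reduction is unnecessary here: the paper gets unboundedness by letting $J\to\infty$ in a single sequence, which is cleaner than concatenating blocks and arguing near-disjointness. If you want to salvage your approach, you would need at minimum a uniform (in $y$ away from $0$) upper bound on $N_{\la}(y)$ of the form $\la^{(n-2)/k-1+o(1)}$ valid for all $\psi$-regular $Q$, and that is a substantial open-ended analytic input not supplied by \cite{Bi} or \cite{Akos}.
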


To give the reader a explicit sense of the range in Theorem~\ref{1} we consider a few specific examples and compare this result to previous results and conjectures.
\begin{itemize}
\item
{\bf Spheres}: When $Q(x)=x_1^2+...+x_n^2 $, $n\geq5$, we can choose any test function $\psi$ which is identically one on the unit sphere in $\R^n$ to recover the spherical averages $S^{(n)}_{\la}$. 
The exponential sum in question is an $n$-fold product of one dimensional Gauss sums, giving that $\alpha_Q=n/2$. 
In turn, we have $\ell^p$-boundedness of the maximal functions $S^{(n)}_{\mathcal{L},*}$ along a lacunary sequence $\LL$ for
\(
p>\frac{n-2}{n-3},
\)
recovering the result in \cite{KLM}. 
Moreover, our result gives the same range of $p$ whenever $Q$ is a positive definite integral quadratic form since $\alpha_Q$ remains the same for such quadratic forms; see \cite{HH:quadratic} for a proof of these bounds. 

\item
{\bf $k$-Spheres}: Let  $Q(x)=x_1^k+...+x_n^k$ for an $k\geq3$, where $n$ is sufficiently large in terms of $k$. 
If $k$ is even, then the obvious choice for $\psi$ is a test function which is identically one on the unit surface $Q(x)=1$ in $\R^n$. 
If $k$ is odd, then we choose $\psi$ so that the restriction its support is contained in the positive orthant. 
Alternatively, one can also work with the expressions $Q(x)=|x_1|^k+\cdots+|x_n|^k$ and choose $\psi$ identically one on the $\R^n$ surface given by $Q(x)=1$.
In either case, a result of \cite{Steckin} gives $\alpha_Q=n/k$ which yields $\ell^p(\Z^n)$ boundedness for
\(
p>\frac{2n-2k}{2n-3k}
\) 
when $n \gtrsim k^2$. 
See \cite{ACHK} for the precise range of $n$ we may obtain here. 
The exponent $\frac{2n-2k}{2n-3k}$ is smaller than the exponent $\frac{n}{n-k}$ for which the full maximal operator $A^{(Q,\psi)}_{\N,*}$ is unbounded. 
\item
{\bf Birch--Magyar forms}: For a general regular form $Q$ of degree $k\geq3$, assuming that $Q$ is $\psi$-regular, Theorem \ref{1} gives the bound 
\[
p > \frac{\mathcal{B}(Q)-2(k-1)2^{k-2}}{\mathcal{B}(Q)-3(k-1)2^{k-2}}\,
\]
from the estimates in (\cite{Bi}, Lemma 5.4). 
Even in the nonsingular cases where $\mathcal{B}(Q)=n$ this fails to reach the exponent $\frac{n}{n-k}$. 
However, this represents an improvement over currently available bounds for the full maximal operator $A^{(Q,\psi)}_{*}$.
\end{itemize}

In all of the above cases, the exponents are larger than $\frac{n}{n-1}$ so that Theorem~\ref{1} does not conflict with Theorem~\ref{theorem:lowerbound}. 
Furthermore, Theorem~\ref{theorem:lowerbound} does not say that every lacunary sequence is unbounded on $\ell^p(\Z^n)$ for $1<p<\frac{n}{n-1}$. 
In fact the first author in \cite{Cook:sparse,Cook:Birch} showed that there exists lacunary sequences $\LL \subset \N$ for which $A^{(Q,\psi)}_{\LL,*}$ is bounded on $\ell^p(\Z^n)$ for all $p>1$ and $n \geq 5$ when $Q(x) := x_1^2+\cdots+x_n^2$. 
Surprisingly, the second author has formulated a precise conjecture regarding the $\ell^p(\Z^n)$ boundedness when $Q(x) := x_1^2+\cdots+x_n^2$; see the forthcoming work \cite{HWZ}. 

With the above results and examples in mind, we make the following two conjectures. 
\begin{conj}
Let $Q$ be a $\psi$-regular integral form of degree $k>1$ with a sequence $\Gamma$ of regular values. 
The maximal operator $A^{(Q,\psi)}_{\Gamma,*}$ is bounded on $\ell^p(\Z^n)$ for 
\(
p > \frac{n}{n-k} 
\)
when the dimension $n$ is sufficiently large with respect to the degree $k$. 
\end{conj}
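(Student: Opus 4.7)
The plan is to follow the Magyar--Stein--Wainger circle method strategy, adapted to the full maximal function. For each $\la \in \Gamma$ I would decompose the Fourier multiplier of $A^{(Q,\psi)}_{\la}$ into a major-arc contribution, supported near rationals $\bfa/q$ with $q \leq Q_0 := \la^{\eps}$, and a minor-arc error. Writing the major-arc part as
\[
M_\la(\xi) := \sum_{q \leq Q_0}\sum_{a\in U_q}\sum_{\bfa\in Z_q^n} \eof{\la a/q}\,F^{(Q)}_q(a,\bfa)\,\ds_\la(\xi-\bfa/q)\,\Psi_q(\xi-\bfa/q),
\]
where $\ds_\la$ is the normalized Fourier transform of surface measure on $\la^{-1/k}\{Q=\la\}$ and $\Psi_q$ is a smooth bump concentrated in a $q^{-1}\la^{-1+\eps}$-neighbourhood of $0$, a transference argument in the Magyar--Stein--Wainger spirit reduces the $\ell^p(\Z^n)$ maximal bound for the main term to a combination of (i) an $L^p(\R^n)$ maximal bound for continuous averages over the hypersurface $\{Q=1\}$, and (ii) an $\ell^p(\Z^n)$ multiplier bound for the arithmetic pieces built from the $F^{(Q)}_q$.

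For the continuous part, provided $\mathcal{B}(Q)$ is large enough that $\{Q=1\}$ carries sufficient nondegenerate curvature, an analogue of Stein's spherical maximal theorem would yield $L^p(\R^n)$ boundedness of the corresponding maximal operator for $p > n/(n-1)$, which lies strictly inside the target range $p > n/(n-k)$. The discrete critical exponent $n/(n-k)$, forced by the delta-function test together with $r_{Q,\psi}(\la) \sim \la^{n/k-1}$, is therefore driven entirely by the arithmetic component, and matches formally the assumption $\alpha_Q \geq n/k$: under such a bound the singular series associated to $F^{(Q)}_q$ would converge absolutely in the appropriate weighted sense and induce a bounded $\ell^p(\Z^n)$ multiplier on the target range. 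The minor-arc error would be handled by Birch-style Weyl differencing, combined with the $\ell^2$ bound from Theorem~B through an $\ell^r$ square-function interpolation to render its contribution negligible for $p$ close to $n/(n-k)$.

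The main obstacle is precisely the gap between the hypothesis $\alpha_Q \geq n/k$ needed to close the argument and the bound $\alpha_Q \leq \mathcal{B}(Q)/(2^{k-1}(k-1))$ currently available from Birch's minor-arc analysis. For $k=2$ one has $\alpha_Q = n/2$ and the strategy recovers Theorem~A, but for $k \geq 3$ the known Weyl sum estimates fall short by a factor of roughly $2^{k-1}(k-1)$. Closing this gap appears to require genuinely new input: improved mean value or decoupling estimates for the varieties $\{Q=\la\}$ in the spirit of recent progress on Vinogradov's mean value theorem, $\ell^p$-improving inequalities for the individual averages $A^{(Q,\psi)}_\la$ sharp enough to absorb the losses in a high-low scheme, or exploitation of finer arithmetic structure such as Kloosterman-type refinements beyond the generic Birch framework. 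Even granting optimal transference and the conjectural continuous maximal bound, without such new tools the argument stalls well above the conjectured exponent $n/(n-k)$, which is why I would expect the main terms analysis to be completely routine compared to the Weyl-sum improvement that remains the true bottleneck.
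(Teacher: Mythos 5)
The statement you were asked to prove is labelled as a \emph{Conjecture} in the paper, and the paper supplies no proof of it; the authors explicitly say that ``substantial improvements to the range of $\ell^p(\Z^n)$ boundedness for $A^{(Q,\psi)}_{*}$ remain intractable at this time due to the present limitations of the circle method.'' You have correctly recognized this: your write-up is not a proof but an accurate diagnosis of why the natural Magyar--Stein--Wainger strategy stalls. On that score your account is essentially the same as the paper's, which attributes the obstruction to the inadequacy of currently available Weyl-sum/Birch estimates for forms of degree $k \geq 3$.

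One refinement to your framing is worth recording. You locate the bottleneck precisely at the gap between $\alpha_Q \geq n/k$ and the available bound $\alpha_Q \leq \mathcal{B}(Q)/(2^{k-1}(k-1))$, and you suggest that with $\alpha_Q \geq n/k$ the argument would close, pointing to $k=2$ where $\alpha_Q = n/2$ and Theorem~A is recovered. But even for $k=2$ the exponent $p > n/(n-2)$ for the \emph{full} maximal function does not follow from $\alpha_Q = n/2$ alone: Magyar's 1997 result, which uses $\alpha_Q$ without further refinement, only reaches $p > \max\{n/(n-2), (2n+2)/(2n-3)\}$, and the sharp $p > n/(n-2)$ in Magyar--Stein--Wainger requires additional Kloosterman-type structure specific to quadratic Gauss sums. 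So an optimal bound on $\alpha_Q$ is necessary but almost certainly not sufficient; one also needs a degree-$k$ analogue of the Kloosterman refinement, which is a second and possibly harder obstruction that your proposal mentions only in passing. You should also be careful not to conflate this conjecture about the \emph{full} maximal operator $A^{(Q,\psi)}_{\Gamma,*}$ with Theorem~\ref{1} of the paper, which concerns the \emph{lacunary} operator $A^{(Q,\psi)}_{\LL,*}$ and achieves an exponent $\frac{2\alpha_Q-2}{2\alpha_Q-3}$ that can in fact be smaller than $\frac{n}{n-k}$; that lacunary gain does not transfer to the full supremum, which is precisely why the conjectured range $p > n/(n-k)$ for the full operator remains genuinely open.
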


\begin{conj}
Let $Q$ be a $\psi$-regular integral form of degree $k>1$ with a sequence $\Gamma$ of regular values. 
If $\LL=(\la_l)_{l=1}^\infty\subseteq \Gamma$ is a lacunary sequence, then the maximal operators $A^{(Q,\psi)}_{\LL,*}$ are bounded on $\ell^p(\Z^n)$ for 
\(
p > \frac{n}{n-1}
\)
when the dimension $n$ is sufficiently large with respect to the degree $k$. 
\end{conj}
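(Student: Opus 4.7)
The plan is to push the interpolation framework developed for Theorem~\ref{1} to its natural endpoint, upgrading the bound on the main-term maximal operator from a near-$\ell^1$ estimate to a sharp estimate that matches the Zienkiewicz obstruction of Theorem~\ref{theorem:lowerbound}. Concretely, I retain the first two steps of the argument for Theorem~\ref{1}: for each regular value $\la_l$, approximate the multiplier of $A^{(Q,\psi)}_{\la_l}$ by a truncated arithmetic--continuous main term $M^N_{\la_l}$ in the sense of Cook, with truncation level $N = N(l)$ chosen to grow sub-lacunarily in $l$; then apply Birch-type minor arc estimates together with the bound $\alpha_Q \gtrsim \mathcal{B}(Q)/((k-1)2^{k-1})$ to show that the $\ell^2$ differences $\|A^{(Q,\psi)}_{\la_l} - M^{N(l)}_{\la_l}\|_{\ell^2 \to \ell^2}$ are summable in $l$. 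Interpolating this summable $\ell^2$ gain with a trivial $\ell^\infty$ bound absorbs the error into $\ell^p$ for every $p>1$, provided $n$ is sufficiently large relative to $k$. The problem then reduces to bounding the main-term maximal operator $\sup_l |M^{N(l)}_{\la_l} f|$ on $\ell^p(\Z^n)$ in the range $p > \tfrac{n}{n-1}$.

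For the main-term maximal I would exploit the factorization of $\widehat{M^N_\la}$ into an arithmetic convolution on $(\Z/q\Z)^n$ of operator norm at most $q^{-\alpha_Q}$, composed with a Euclidean lacunary maximal function along dilates of the hypersurface $\{Q=1\}$, summed over denominators $q \leq N^A$. Under the $\psi$-regularity hypothesis the relevant Euclidean hypersurface has nonvanishing Gaussian curvature on the support of $\psi$, so Calder\'on--Coifman--Weiss lacunary theory gives $L^p(\R^n)$-boundedness for every $p>1$. Transferring via the Magyar--Stein--Wainger sampling principle preserves this $\ell^p$ bound on each slice of the main-term maximal indexed by a fixed denominator $q$, and the sum over $q$ converges whenever $\alpha_Q$ (hence $n$) is sufficiently large. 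Taken together these steps would, optimistically, give an $\ell^p$ bound for every $p>1$ on each piece of the main-term maximal in which the denominators $q_l$ do not grow with $l$.

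The genuine obstacle lies in the contribution of the \emph{extreme} denominators $q_l$ that vary with the index $l$, since Zienkiewicz's construction aligns exactly these pieces to force unboundedness below $\tfrac{n}{n-1}$. Obtaining the sharp range $p > \tfrac{n}{n-1}$ therefore demands a genuinely joint arithmetic--continuous analysis rather than treating each fixed $q$ in isolation. My strategy would be to prove a restricted weak-type $(\tfrac{n}{n-1},\tfrac{n}{n-1})$ estimate for the full main-term maximal and then interpolate with the $\ell^2$-bound of Magyar's Theorem~B. This in turn seems to require new cancellation estimates for bilinear sums of the form $\sum_l F^{(Q)}_{q_l}(a_l,\bfa_l)\,\ds_{\la_l}(\cdot - (a_l,\bfa_l)/q_l)$ that exploit the lacunarity of $(\la_l)$ together with the sharp counting $r_{Q,\psi}(\la_l) \asymp \la_l^{n/k-1}$. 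Balancing this arithmetic--geometric cancellation against the pointwise contribution of a single indicator function is the step I expect to be the principal obstruction.
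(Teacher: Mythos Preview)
The statement you are attempting to prove is labeled \emph{Conjecture~2} in the paper; the paper offers no proof and explicitly presents it as an open problem motivated by Theorems~\ref{1} and~\ref{theorem:lowerbound}. There is therefore no ``paper's own proof'' to compare against, and your proposal should be read as a research programme rather than as an alternative argument.

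As a programme your outline is honest about its own incompleteness: you correctly isolate the contribution of denominators $q$ that vary with $l$ as the essential difficulty, and you state that the restricted weak-type $(\tfrac{n}{n-1},\tfrac{n}{n-1})$ bound for the main-term maximal would require ``new cancellation estimates'' that you do not supply. That is precisely the gap. The machinery of Sections~\ref{section:reduction}--\ref{section:interpolation} yields, for the main-term pieces $M_{*,j}$, only the bound $\|M_{*,j}\|_{\ell^p\to\ell^p}\lesssim j^2 2^j$ near $\ell^1$ (Theorem~\ref{theorem:new}); this loss of a full factor $2^j$ is what limits the interpolation to $p>\frac{2\alpha_Q-2}{2\alpha_Q-3}$. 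To reach $p>\frac{n}{n-1}$ one would need to replace $2^j$ by something like $2^{j/(n-1)+\eps}$, and nothing in your sketch indicates how to do this. Your proposed bilinear sums mix exponential-sum and oscillatory-integral data in a way for which no usable orthogonality is currently known; the lacunarity of $(\la_l)$ gives disjointness in the continuous frequency variable but not in the arithmetic variable $q$, and it is exactly the latter that Zienkiewicz's construction exploits.

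Two smaller points. First, $\psi$-regularity does not imply nonvanishing Gaussian curvature of $\{Q=1\}$ on the support of $\psi$; the paper obtains the Euclidean lacunary bound not via Calder\'on--Coifman--Weiss but via the Fourier decay of Lemma~\ref{akos} combined with the Duoandikoetxea--Rubio de Francia criterion \cite{DR86}. Second, your assertion that ``the sum over $q$ converges whenever $\alpha_Q$ is sufficiently large'' is true only after interpolation with $\ell^2$; at a fixed $p$ near $1$ the sum over $q$ in the paper's decomposition diverges like $2^j$, which is why Theorem~\ref{theorem:new} has the shape it does.
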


\subsection{Notation}

\begin{itemize}
\item
We will write $f(\lambda) \lesssim g(\lambda)$ if there exists a constant $C>0$ independent of all $\lambda$ under consideration (e.g. $\lambda$ in $\N$ or in $\Gamma_{\form}$) such that 
\[
|f(\lambda)| \leq C |g(\lambda)|. 
\]
Furthermore, we will write $f(\lambda) \gtrsim g(\lambda)$ if $q(\lambda) \lesssim f(\lambda)$ while we will write $f(\lambda) \eqsim g(\lambda)$ if $f(\lambda) \lesssim g(\lambda)$ and $f(\lambda) \gtrsim g(\lambda).$
\item
Subscripts in the above notations will denote parameters, such as the dimension $n$ or degree $k$ of a form $\form$, on which the implicit constants may depend. 
\item
$\T^n$ denotes the $n$-dimensional torus $(\R/\Z)^n$ identified with the unit cube $[-1/2,1/2]^n$. 
\item
$*$ denotes convolution on a group such as $\Z^n$, $\T^n$ or $\R^n$. 
It will be clear from context as to which group the convolution takes place.
\item
$e(t)$ will denote the character $e^{-2\pi it}$ for $t \in \R$, $\T$ or $\Z_q$.
\item
For a function $f: \Z^n \to \C$, its $\Z^n$-Fourier transform will be denoted $\hat{f}(\xi)$ for $\xi \in \T^n$. 
For a function $f: \R^n \to \C$, its $\R^n$-Fourier transform will be denoted $\widetilde{f}(\xi)$ for $\xi \in \R^n$. 
\item 
The inverse Fourier transform on $\Z^n$ will be denoted by $\mathscr{F}^{-1}$. 
\item
$\mu$ denotes the M\"obius function. 
\item
For $j \in \N$, set $I_j=[2^{j-1},2^j) \cap \Z$.
\end{itemize}

\subsection*{Acknowledgements}

The authors thank Stephen Wainger for discussions on this problem, and for his encouragement in pursuing it. 
This collaboration was supported by funding from the Heilbronn Institute for Mathematical Research.

\section{Reduction to the maximal function of the main term}\label{section:reduction}

Here we give an outline of the argument for Theorem~\ref{1}. 
The main approach is in line with the argument in \cite{MSW} which goes as follows.
\begin{enumerate}
\item 
Find a suitable approximation of the surface measures by Fourier multipliers yielding main terms and an error term. Each Fourier multiplier corresponds to a distinct convolution operator with an associated maximal operator.
\item 
Deduce $\ell^p$-boundedness for the maximal operators that arise from the main terms by transference from a Euclidean maximal function inequality. 
\item 
Use a partial result (such as the main result of \cite{Mag}) to show that the maximal operator arising from the error term is bounded on the appropriate $\ell^p$ spaces.  
\end{enumerate} 
Lacunary maximal functions allow for greater flexibility in Step~3 which gives us more freedom in choosing the form of our main terms in the approximation of Step~1.
 
To proceed, fix a $\psi$-regular integral form $Q$ and a lacunary sequence $\LL := (\la_l)_{l=1}^\infty \subset \Gamma$ inside a sequence of regular values $\Gamma$ of the form $Q$ for the remainder of the paper. 
The dependence on  $\LL$, $Q$, and $\psi$ is suppressed from the notation. 
Our starting point is to find a suitable approximation to the Fourier multipliers \[
\what(\xi) 
= 
\frac{1}{r(\la_l)}\sum_{Q(x)=\la_l} e(x\cdot\xi)
\]
of the shape $m_{\la_l}(\xi)+e_{\la_l}(\xi)$. 
However, we first state our interpolation inequality which reveals what we need from $e_{\la_l}$, and hence $m_{\la_l}$. 
\begin{lemma}\label{errors}
Suppose that each multiplier $\what = m_{\lambda_l}+e_{\lambda_l}$ decomposes into terms $m_{\lambda_l}$ and $e_{\lambda_l}$ for each $\lambda_l \in \LL$. 
Define the convolution operator
\begin{align*}
M_{\la_l} f 
& = 
\mathscr{F}^{-1}(\hat{f} \cdot m_{\la_l}),
\end{align*}
and the maximal operator
\begin{align*}
M_*f 
& = 
\sup_{l\geq 1} | M_{\la_l} f |.
\end{align*}
Assume that there exists some $\delta>0$ such that 
\[
\|e_{\la_l}\|_{L^\infty(\T^n)}
\<
\la_l^{-\delta}
\]
uniformly for all $l \in \N$. 
If $M_*$ is bounded on $\ell^{p_0}(\Z^n)$ for some $1 \leq p_0 \leq 2$, then $A_*$ is bounded on $\ell^p(\Z^n)$ for all $p>p_0$. 
\end{lemma}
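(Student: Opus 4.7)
The plan is to exploit the pointwise decomposition $A_{\la_l} = M_{\la_l} + E_{\la_l}$, where $E_{\la_l} f := \mathscr{F}^{-1}(\hat f \cdot e_{\la_l})$, and the consequent bound $A_*f \le M_*f + E_*f$ with $E_*f := \sup_l |E_{\la_l} f|$. Since $\|A_*f\|_{\ell^\infty(\Z^n)} \le \|f\|_{\ell^\infty(\Z^n)}$ holds trivially, by Marcinkiewicz interpolation it suffices to prove that $A_*$ is bounded on $\ell^{p_1}(\Z^n)$ for an arbitrary $p_1 \in (p_0,2]$; this will deliver the conclusion for all $p > p_0$.

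The heart of the argument is to bound $E_*$ on $\ell^p(\Z^n)$ for $p \in (p_0, 2]$. Plancherel gives $\|E_{\la_l}\|_{\ell^2 \to \ell^2} = \|e_{\la_l}\|_{L^\infty(\T^n)} \lesssim \la_l^{-\de}$, while writing $E_{\la_l} = A_{\la_l} - M_{\la_l}$ and combining the trivial averaging bound $\|A_{\la_l}\|_{\ell^{p_0}\to\ell^{p_0}} \le 1$ with the pointwise inequality $|M_{\la_l} f| \le M_*f$ produces the uniform estimate $\|E_{\la_l}\|_{\ell^{p_0}\to\ell^{p_0}} \lesssim 1$. Riesz--Thorin interpolation between these two endpoints yields
\[
\|E_{\la_l}\|_{\ell^p \to \ell^p} \lesssim \la_l^{-\te \de}, \qquad \frac{1}{p}=\frac{1-\te}{p_0}+\frac{\te}{2},
\]
with $\te \in (0,1]$ for $p \in (p_0, 2]$. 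Minkowski's inequality and the lacunarity of $\LL$ then produce
\[
\|E_* f\|_{\ell^p} \le \sum_l \|E_{\la_l} f\|_{\ell^p} \lesssim \Big(\sum_l \la_l^{-\te \de}\Big)\|f\|_{\ell^p} \lesssim \|f\|_{\ell^p}.
\]

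To close the argument I need a matching bound on $M_*$. Combining the reverse pointwise inequality $M_* f \le A_*f + E_*f$ with the $\ell^2$ boundedness of $A_*$ supplied by Theorem B and the $\ell^2$ estimate on $E_*$ obtained above, we obtain $M_*$ bounded on $\ell^2(\Z^n)$. Interpolating the sublinear maximal operator $M_*$ between $\ell^{p_0}$ (given) and $\ell^2$ (just established), using the standard linearization of the supremum to invoke Marcinkiewicz, gives $\|M_*\|_{\ell^p \to \ell^p} < \infty$ for all $p \in (p_0, 2]$. Substituting this and the $E_*$ bound into $A_*f \le M_*f + E_*f$ proves that $A_*$ is bounded on $\ell^{p_1}$ for any $p_1 \in (p_0, 2]$, and a final Marcinkiewicz interpolation against the trivial $\ell^\infty$ endpoint extends the bound to all $p > p_0$.

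The main obstacle is the Riesz--Thorin step for $E_*$: the interpolation parameter $\te$ vanishes at $p=p_0$, so the geometric decay $\la_l^{-\te\de}$ is lost there and the sum over $l$ diverges. This is exactly why the conclusion requires $p > p_0$ to be strict and why the quantitative hypothesis $\|e_{\la_l}\|_{L^\infty(\T^n)} \lesssim \la_l^{-\de}$ (as opposed to mere pointwise decay to zero) is essential to the argument.
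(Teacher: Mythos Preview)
Your proof is correct and follows essentially the same approach as the paper: both decompose $A_{\la_l}=M_{\la_l}+E_{\la_l}$, obtain the uniform $\ell^{p_0}$ bound on $E_{\la_l}$ from $E_{\la_l}=A_{\la_l}-M_{\la_l}$, interpolate against the $\ell^2$ decay coming from $\|e_{\la_l}\|_{L^\infty}\lesssim \la_l^{-\delta}$, and sum using lacunarity. Your write-up is in fact slightly more careful than the paper's at one point: you explicitly establish the $\ell^2$ boundedness of $M_*$ (via $M_*\le A_*+E_*$ and Theorem~B) before interpolating $M_*$ between $\ell^{p_0}$ and $\ell^2$, whereas the paper tacitly passes from ``$M_*$ bounded on $\ell^{p_0}$'' to ``it suffices to bound $E_*$ on $\ell^p$'' without making that intermediate step explicit.
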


\begin{rem}
We will use a similar statement over $\R^n$ which follows by an analogous proof. 
\end{rem}

\begin{proof}
Let $\what, m_{\lambda_l}, e_{\lambda_l}$ and $\delta$ be as in the statement of the Lemma. 
Write $A_{\lambda_l}, M_{\lambda_l}, E_{\lambda_l}$ for the convolution operators rescpectivly associated to the multipliers $\what, m_{\lambda_l}, e_{\lambda_l}$.
Recall that $I_j$ denotes the discrete interval $[2^{j-1},2^j) \cap \Z$. 
Also let $p_0$ denote the exponent of the lemma; that is, the exponent for which $M_*$ is bounded. 
Since $A_{\la_l} = M_{\la_l}+E_{\la_l}$, we have 
\[
\|\sup_{\la_l\in I_j } |A_{\la_l}f|\|_{\ell^p} 
\leq 
\|\sup_{\la_l\in I_j } |M_{\la_l}f|\|_{\ell^p} + \|\sup_{\la_l\in I_j } |E_{\la_l}f|\|_{\ell^p} 
\]
for each $p$ and all $f \in \ell^p$. 
Take $p_0<p\leq2$. 
Since we have assumed that the maximal operator $\sup_{\la_l\in I_j } |M_{\la_l}|$ is bounded on $\ell^{p_0}$, it suffices to show that the maximal operator $\sup_{\la_l\in I_j } |E_{\la_l}|$ is bounded on $\ell^p$ for $p>p_0$. 

There exists a constant $0<C<\infty$ depending on $\LL$ such that for each $j$ we have $\#(\LL \cap I_j) \leq C$ since $(\la_l)_{l=1}^\infty$ is a lacunary sequence.
This implies that 
\[
\|\sup_{\la_l\in I_j } |E_{\la_l}f|\|_{\ell^p} 
\leq 
\sum_{\la_l\in I_j} \|E_{\la_l}f\|_{\ell^p} 
\leq 
C \sup_{\la_l\in I_j} \|E_{\la_l}f\|_{\ell^p}
\]
Therefore, it suffices to show that 
\[
\|E_{\la_l}f\|_{\ell^p} 
\lesssim 
\la_l^{-\eta}
\]
for some $\eta>0$ depending on $p$. 
The hypothesis $\|e_{\la_l}\|_{L^\infty(\T^n)}\<\la_l^{-\delta}$ is equivalent to $\|E_{\la_l}f\|_{\ell^2 \to \ell^2} \lesssim \la_l^{-\delta}$. 
By interpolation, it now suffices to show that $\|E_{\la_l}f\|_{\ell^{p_0} \to \ell^{p_0}} \lesssim 1$ uniformly for $\la_l \in \LL$. 

Young's inequality implies that $\sup_{\la_l \in \LL} \|A_{\la_l}\|_{\ell^p \to \ell^p} \leq 1$ for all $1 \leq p \leq \infty$ since each of the averages $A_{\la_l}$ is given by convolution with a probability measure. 
Since $M_*$ is bounded on $\ell^{p_0}$, we have 
\[
\sup_{\la_l \in \LL} \| M_{\la_l}f \|_{\ell^{p_0}} 
\leq 
\|M_*f\|_{\ell^{p_0}} 
\lesssim 
\|f\|_{\ell^{p_0}}.
\]
Therefore,
\[
\sup_{\la_l \in \LL} \| E_{\la_l}f \|_{\ell^{p_0}} 
=
\sup_{\la_l \in \LL} \| (A_{\la_l}-M_{\la_l})f \|_{\ell^{p_0}} 
\leq 
\sup_{\la_l \in \LL} \| A_{\la_l} \|_{\ell^{p_0}} + \sup_{\la_l \in \LL} \| M_{\la_l}f \|_{\ell^{p_0}} 
\lesssim 
\|f\|_{\ell^{p_0}}
\]
as desired. 
\end{proof}

In our method, the following form of the main term $m_\la$ is useful.  
Define the Fourier multipliers on $\T^n$ as 
\begin{equation}\label{2.1}
m_{\la_l,j,q}(\xi) 
:= 
\sum_{a \in U_q} \sum_{\bfa\in Z_q^n} F_q(a,\bfa) e(-a \la_l/q) \zeta(10^j(\xi-\bfa/q)) \ds(\la_l^{1/k}(\xi-\bfa/q))
\end{equation}
for all $q\in I_j=[2^{j-1},2^j)$. 
Here, the function $\zeta$ is a fixed smooth, non-negative function supported on the interval $[-1/5,/1,5]$ which is identically one on $[-1/10,1/10]$, and the measure $d\sigma$ is defined by 
\[
d\sigma(x) = \psi(x)\frac{d\mu(x)}{|\nabla Q(x)|}
\]
where $\mu$ the Euclidean surface measure on the Euclidean surface $\{ x \in \R^n: Q(x)=1\}$. 
Section~6 of \cite{Bi} shows that the density $\psi/|\nabla Q|$ is well behaved and that assuming the existence of a nonsingular solution in the support of $\psi$ gives that $\int d\sigma > 0$.

The particular shape \eqref{2.1} of the main term is from \cite{Cook:sparse}; see also \cite{Cook:Birch}. 
This main term relates to the decomposition used in \cite{Akos}, but more closely resembles the main terms of \cite{Boup>1}. 
Our main term has the advantage of being very localized near rational points $\bfa/q$. 
Note that for each $q \in [2^{j-1},2^j) \cap \Z$, the multipliers $\zeta(10^j(\xi-\bfa/q))$ are disjoint as $\bfa/q$ varies over $\bfa \in Z_q^n$. 
While this is important in \cite{MSW}, it is more important for us that the factor of $10^j$ is the same for all $q \in [2^{j-1},2^j) \cap \Z$. 

For each $\lambda \in \Gamma$ we define 
\[
m_{\la,j}(\xi) 
= 
\sum_{q\in I_j} m_{\la,j,q}(\xi).
\]
Our main term in the approximation is then
\[
m_{\la}(\xi) 
= 
\sum_{j=1}^\infty m_{\la,j}(\xi).
\]
Lemma~5 in \cite{Cook:sparse} yields the following $\ell^2(\Z^n)$-approximation for our averages. 
\begin{lemma}[$\ell^2$-approximation]\label{ell2:approximation}
There exists a $\delta>0$ depending on $Q$ such that 
\[
\|\what-m_{\la_l}\|_{L^\infty(\T^n)}
\< 
\la_l^{-\delta}.
\]
\end{lemma}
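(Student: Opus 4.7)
The plan is to prove this via the Hardy--Littlewood circle method in the Birch--Magyar setting; this is essentially Lemma 5 of \cite{Cook:sparse}. First, orthogonality on $[0,1]$ gives
\[
r(\la_l) \what(\xi) = \int_0^1 e(-\al\la_l) T(\al, \xi) \, d\al,
\qquad
T(\al, \xi) := \sum_{x \in \Z^n} \psi(x/\la_l^{1/k}) e(\al Q(x) + \xi \cdot x),
\]
and I would dissect $[0,1]$ into Hardy--Littlewood major arcs of radius $\la_l^{\theta - 1}$ around each rational $a/q$ with $\gcd(a,q) = 1$ and $q \leq \la_l^\theta$, for a small parameter $\theta > 0$ to be chosen.

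On the minor arcs $\mathfrak{m}$, I would apply Birch's Weyl-type estimate (valid under the rank hypothesis $\mathcal{B}(Q) > (k-1)2^k$), which gives $\sup_{\al \in \mathfrak{m}} |T(\al, \xi)| \lesssim \la_l^{n/k - \delta'}$ for some $\delta' > 0$ depending on $Q$; dividing by $r(\la_l) \gtrsim \la_l^{n/k - 1}$ after integration produces an $O(\la_l^{-\delta})$ error. On a major arc around $a/q$, write $\al = a/q + \beta$ and split the sum over $x$ into residue classes modulo $q$. Poisson summation in the Euclidean variable then factors $T(a/q + \beta, \xi)$ as a product of the complete Gauss sum $q^n F_q(a, \bfa) \la_l^{n/k}$ and an oscillatory Euclidean integral, with $\bfa/q$ chosen so that $\xi - \bfa/q$ lies in the fundamental domain. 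Integrating in $\beta$ over the major arc and using stationary phase for the resulting oscillatory integral produces exactly the summand of \eqref{2.1}, up to $O(\la_l^{-\delta})$.

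The final step, which is also the main subtlety, is to replace the natural major-arc indicator (a cutoff at scale $\la_l^{\theta - 1}$ around $\bfa/q$) by the smooth cutoff $\zeta(10^j(\xi - \bfa/q))$ at the much coarser scale $10^{-j} \sim q^{-1}$ used in $m_{\la_l, j, q}$. On the annular region where the two cutoffs disagree, $\la_l^{1/k}|\xi - \bfa/q|$ is large, so $\ds(\la_l^{1/k}(\xi - \bfa/q))$ decays rapidly via stationary phase for the Fourier transform of the smooth hypersurface measure (nonsingularity of $\{Q = 1\} \cap \text{supp}(\psi)$ enters here). The contribution from $q > \la_l^\theta$ is absorbed by the bound $|F_q(a, \bfa)| \lesssim q^{-\al_Q}$, which sums absolutely since $\al_Q > 2$ (Corollary~2 of \cite{Akos}).

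The hard part will be the parameter bookkeeping: the final exponent $\delta$ is a minimum over savings from the minor arcs, the Poisson truncation, the $\ds$ tails, and the large-$q$ tail, each of which depends on $\theta$. One must choose $\theta$ small enough to keep the Weyl savings alive, yet large enough that the Poisson analysis on the major arcs converges and the rational tail is negligible; this balance is possible precisely because the hypotheses that $Q$ is $\psi$-regular and $\mathcal{B}(Q)$ is sufficiently large force $\al_Q > 2$.
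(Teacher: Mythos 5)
The paper does not actually prove Lemma~\ref{ell2:approximation}: it states it and cites Lemma~5 of \cite{Cook:sparse} verbatim, with no internal argument. You correctly recognize this (``this is essentially Lemma~5 of \cite{Cook:sparse}''), so there is no in-text proof for me to compare your sketch against line by line. That said, your outline is a faithful account of how the cited result is established: the circle-method dissection, Birch's Weyl inequality on the minor arcs (available because $\mathcal{B}(Q) > (k-1)2^k$), Poisson summation on the major arcs yielding the factorization into $F_q(a,\bfa)$ and the oscillatory integral $\ds$, and finally the replacement of the natural major-arc indicator at scale $\la_l^{\theta-1}$ by the coarser smooth cutoff $\zeta(10^j(\xi-\bfa/q))$, justified by the decay of $\ds$ (Lemma~\ref{akos} / Lemma~6 of \cite{Akos}) on the discrepancy region and by $|F_q|\<q^{-\alpha_Q}$ with $\alpha_Q>2$ for the large-$q$ tail. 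These are precisely the ingredients that Magyar's approximation (\cite{Akos}, and its localized reformulation in \cite{Cook:sparse,Cook:Birch}) rests on, and you have correctly flagged the genuinely delicate step, namely the exchange of the $\la_l$-dependent arc width for the $q$-dependent cutoff width, as the place where the parameter bookkeeping must be done carefully. Two small remarks: the multiplier $\what$ as written in Section~\ref{section:reduction} suppresses the factor $\psi(x/\la_l^{1/k})$, which should of course be present for the sum to be absolutely summable and for your Poisson step to go through; and in your major-arc factorization the constant in front of $F_q$ should be checked against the normalization $F_q = q^{-n}\sum_{s\in Z_q^n}(\cdots)$ already built into the definition, but this does not affect the structure of the argument.
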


Applying Lemma~\ref{errors} and Lemma~\ref{ell2:approximation}, it remains to show that the operator $M_*$ is bounded on $\ell^p$ for $p > \frac{2\alpha_Q-2}{2\alpha_Q-3}$; that is, when $p$ satisfies the bound in Theorem \ref{1}. 
To accomplish this task, we will obtain bounds near $\ell^1$ and bounds on $\ell^2$ for each of the maximal operators $M_{*,j}$ with which to interpolate and sum up. 
This is the dominant approach for essentially all problems of this type (continuous or discrete). 

More precisely, we will use the following bound. 
\begin{lemma}[See \cite{Akos}]\label{lemma:l2approximation}
Uniformly for $j \in \N$, we have 
\[
\|M_{*,j}\|_{\ell^2\to\ell^2} 
\<_\beta 
2^{(2-\beta)j}
\]
for all $\beta < \alpha_Q$ (as defined in \eqref{eq:alpha}). 
\end{lemma}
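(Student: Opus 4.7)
The plan is to combine a pointwise Weyl-sum bound on $m_{\la_l, j}$ with the Magyar--Stein--Wainger sampling principle and the classical $L^2(\R^n)$ boundedness of the continuous lacunary maximal function over dilates of the surface $\{Q = 1\}$.

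First I would establish the pointwise estimate
\[
\sup_{l \geq 1} \sup_{\xi \in \T^n} |m_{\la_l, j}(\xi)| \lesssim 2^{(2-\beta)j} \quad \text{for any } \beta < \alpha_Q.
\]
Fix $q \in I_j$. Since each bump $\zeta(10^j(\xi - \bfa/q))$ has radius $2^{-j}/5$ while distinct rationals $\bfa/q$ are separated by at least $1/q \geq 2^{-j}$ in each coordinate, the bumps are pairwise disjoint as $\bfa$ varies in $Z_q^n$, so at each $\xi$ only one $\bfa$ contributes to $m_{\la_l, j, q}(\xi)$. Using $|F_q(a, \bfa)| \lesssim q^{-\beta}$ (from the definition of $\alpha_Q$), $|U_q| \leq q$, and $\|\ds\|_\infty \lesssim 1$, one obtains $|m_{\la_l, j, q}(\xi)| \lesssim q^{1-\beta}$. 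Summing over the $\sim 2^j$ values of $q \in I_j$ yields the pointwise bound, and Plancherel then gives $\|M_{\la_l, j}\|_{\ell^2 \to \ell^2} \lesssim 2^{(2-\beta)j}$ for each individual $l$.

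To upgrade this to a bound on the maximal operator $M_{*, j}$, I would invoke the Magyar--Stein--Wainger sampling principle. For each pair $(q, \bfa)$, conjugating the $(q, \bfa)$-piece by the modulation $f(y) \mapsto e(-y \cdot \bfa/q) f(y)$ transforms its Fourier multiplier into $\zeta(10^j \eta) \ds(\la_l^{1/k} \eta)$, which is supported in the ball $|\eta| < 1/2$. MSW sampling then transfers the resulting $\Z^n$-convolution operator to its $\R^n$-analogue; the associated continuous lacunary maximal operator is bounded on $L^2(\R^n)$ by Calder\'on's theorem (or Coifman--Weiss), giving a uniform $\ell^2(\Z^n)$ bound for each single-$(q, \bfa)$ piece. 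Setting $c_{l, q, \bfa} := \sum_a F_q(a, \bfa) e(-a \la_l/q)$, so that $|c_{l, q, \bfa}| \lesssim q^{1-\beta}$ uniformly in $l$, and combining this with the disjoint Fourier supports of the $\bfa$-pieces at fixed $q$ via a vector-valued $L^2(\ell^\infty_l)$ argument that preserves Plancherel orthogonality in $\bfa$, one obtains $\|M_{*, j, q}\|_{\ell^2 \to \ell^2} \lesssim q^{1-\beta}$. The triangle inequality for the supremum over $q \in I_j$ then yields $\|M_{*, j}\|_{\ell^2 \to \ell^2} \lesssim \sum_{q \in I_j} q^{1-\beta} \lesssim 2^{(2-\beta)j}$.

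The main obstacle is preserving the Plancherel orthogonality in $\bfa$ while passing to the supremum over $l$: a naive triangle inequality in $\bfa$ would lose a catastrophic factor of $q^n$. Avoiding this requires coupling the MSW sampling with the disjoint-support orthogonality in a single vector-valued step, effectively treating the $\bfa$-pieces as a Littlewood--Paley-type decomposition on the rational frequencies and applying the continuous lacunary maximal estimate to each frequency component simultaneously.
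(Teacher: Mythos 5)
Your first paragraph is correct: disjointness of the $\bfa$-bumps at a fixed $q$ gives the pointwise estimate $|m_{\la_l,j,q}(\xi)|\lesssim q^{1-\beta}$, and summing over the $\sim 2^j$ values of $q\in I_j$ and applying Plancherel does give $\|M_{\la_l,j}\|_{\ell^2\to\ell^2}\lesssim 2^{(2-\beta)j}$ for each fixed $l$.

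The gap is precisely at the point you flag as ``the main obstacle.'' Taking $\sup_l$ destroys the Fourier orthogonality in $\bfa$: the function $\sup_l |\sum_\bfa g_{l,\bfa}|$ has no useful Fourier localization even when each $g_{l,\bfa}$ does, so there is no vector-valued $L^2(\ell^\infty_l)$ version of Plancherel that lets you sum the single-$(q,\bfa)$ estimates with only an $\ell^2_\bfa$ loss. You correctly identify the obstacle, but the sentence ``a vector-valued $L^2(\ell^\infty_l)$ argument that preserves Plancherel orthogonality in $\bfa$'' does not name an argument; it restates what is needed. Moreover, even if such an identity held, the pointwise bound $|c_{l,q,\bfa}|\lesssim q^{1-\beta}$ is not strong enough: the quantity that actually appears is an average over residues, and feeding only the sup-norm of $c$ into the residue decomposition below gives $\|M_{*,j,q}\|\lesssim q^{(3-2\beta)/2}$ and hence $2^{j(5-2\beta)/2}$ after summing over $q$, which is worse by a factor of $2^{j/2}$.

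The resolution, which is what Magyar's argument (and the Magyar--Stein--Wainger sampling framework it refines) actually runs on, uses the fact that $c_{l,q,\bfa}$ depends on $\la_l$ only through $r:=\la_l\bmod q$. Write $m_{\la,j,q}=\tau_r(\xi)\cdot\nu_{\la,j,q}(\xi)$ where $\tau_r(\xi)=\sum_\bfa c^{(r)}_\bfa\,\zeta(10^j(\xi-\bfa/q))$ is a fixed multiplier depending only on $r$ and $\nu_{\la,j,q}(\xi)=\sum_\bfa \zeta(10^j(\xi-\bfa/q))\,\ds(\la^{1/k}(\xi-\bfa/q))$ carries no Weyl sums (one should replace $\zeta$ by $\zeta^{1/2}$ in each factor to make the product exact). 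Since $\tau_r$ and $\nu_{\la}$ commute, $\sup_l|M_{\la_l,j,q}f|\le\sup_r\sup_{\la_l\equiv r}|N_{\la_l}(T_rf)|$, and using $(\sup_r|b_r|)^2\le\sum_r|b_r|^2$ followed by the MSW sampling bound $\|N_{*}\|_{\ell^2\to\ell^2}\lesssim 1$ gives
\[
\|M_{*,j,q}f\|_{\ell^2}^2\ \lesssim\ \sum_{r\in Z_q}\|T_rf\|_{\ell^2}^2
= \int \sum_\bfa\Bigl(\sum_{r\in Z_q}|c^{(r)}_\bfa|^2\Bigr)\,\zeta(10^j(\xi-\bfa/q))^2\,|\hat f(\xi)|^2\,d\xi.
\]
Now Parseval on $Z_q$ in the variable $r$ yields
\[
\sum_{r\in Z_q}|c^{(r)}_\bfa|^2=\sum_{r\in Z_q}\Bigl|\sum_{a\in U_q}F_q(a,\bfa)e(-ar/q)\Bigr|^2=q\sum_{a\in U_q}|F_q(a,\bfa)|^2\lesssim q^{2-2\beta},
\]
a gain of a full factor of $q$ over the crude $q\cdot(q^{1-\beta})^2=q^{3-2\beta}$. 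This produces $\|M_{*,j,q}\|_{\ell^2\to\ell^2}\lesssim q^{1-\beta}$ and then $\sum_{q\in I_j}q^{1-\beta}\lesssim 2^{(2-\beta)j}$. So the essential ingredient you are missing is the residue-class factorization together with Parseval in $r$ (equivalently, an $\ell^2$-average over $a\in U_q$), not any stronger orthogonality in $\bfa$.
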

This is proved in \cite{Akos}. 
Our main contribution is to give a new estimate of the following form. 
\begin{thm}\label{theorem:new}
For each $j \in \N$, we have 
\begin{equation}\label{estimate:new}
\|M_{*,j}\|_{\ell^p\to\ell^p} 
\<_{p} 
j^2 2^{j}
\end{equation}
for all $1<p<2$.
\end{thm}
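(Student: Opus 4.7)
The plan is to transfer the problem to $L^p(\R^n)$ via Magyar's sampling principle, express the convolution kernel of $M_{\la,j}$ in terms of Ramanujan sums, and reduce the estimate to the Euclidean lacunary spherical maximal inequality of Calder\'on and Coifman--Weiss. Transference applies for $j\geq 1$ since the $10^{-j}$-neighborhoods of the rationals $\bfa/q$ with $q\in I_j$ are pairwise disjoint in $\T^n$, so $m_{\la_l,j}$ can be regarded as supported in $[-1/2,1/2]^n$.

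\textbf{Kernel computation.} Substituting the definition $F_q(a,\bfa)=q^{-n}\sum_{s\in Z_q^n} e(Q(s)a/q+s\cdot\bfa/q)$ and applying orthogonality on $Z_q^n$ yields the identity
\[
\sum_{\bfa\in Z_q^n} F_q(a,\bfa)\, e(z\cdot\bfa/q) \;=\; e((-1)^k Q(z) a/q).
\]
This shows that $M_{\la_l,j,q}$ is convolution with the kernel $K_{\la_l}(z)\, c_q((-1)^k Q(z)-\la_l)$, where $c_q(m)=\sum_{a\in U_q} e(am/q)$ is the Ramanujan sum and $K_{\la_l} = \mathscr{F}_{\R^n}^{-1}\Psi_{\la_l}$ factorizes as $\tilde\zeta_{10^j}*_{\R^n}\sigma_{\la_l}$, with $\sigma_{\la_l}$ a weighted surface measure on $\{Q=\la_l\}$ and $\tilde\zeta_{10^j}$ a Schwartz bump of unit mass at scale $10^j$. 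Summing in $q$, the kernel of $M_{\la_l,j}$ equals $K_{\la_l}(z)\,C_j((-1)^k Q(z)-\la_l)$ where $C_j(m) := \sum_{q\in I_j}c_q(m)$. Using $|c_q(m)|\leq \gcd(q,|m|)$ for $m\neq 0$ and $\#\{q\in I_j: d\mid q\}\leq 2^j/d$, one obtains $|C_j(m)|\lesssim 2^j\tau(|m|)$ for $m\neq 0$, while the diagonal contribution $m=0$ turns out to be subdominant thanks to the size $|K_{\la_l}(z)|\lesssim 10^{-j}\la_l^{(k-n)/k}$ on the surface.

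\textbf{Euclidean reduction.} The key analytic input is that $|K_{\la_l}|$ is majorized pointwise by $h_{10^j}*_{\R^n}\sigma_{\la_l}$ for a nonnegative radially decreasing $L^1$-normalized bump $h_{10^j}$, so that
\[
\sup_l\bigl|(h_{10^j}*_{\R^n}\sigma_{\la_l})*g\bigr|(x) \;\lesssim\; M_{HL}\bigl(\sup_l|\sigma_{\la_l}*g|\bigr)(x),
\]
where $M_{HL}$ is the Euclidean Hardy--Littlewood maximal function. The Calder\'on--Coifman--Weiss theorem gives a uniform $L^p(\R^n)$-bound for $\sup_l|\sigma_{\la_l}*g|$ when $p>1$, and $M_{HL}$ is itself bounded on $L^p$ for $p>1$, so the lacunary maximal function of the $h_{10^j}*\sigma_{\la_l}$ is uniformly $L^p$-bounded independent of $j$. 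Combined with the arithmetic weight from $C_j$, each piece $\sup_l|M_{\la_l,j,q}|$ has $\ell^p$-operator norm $\lesssim_p j$, and summing over $q\in I_j$ with $\sum_{q\in I_j}\tau(q)\lesssim 2^j j$ yields the claimed bound $\lesssim_p j^2 2^j$ after transferring back to $\Z^n$.

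\textbf{Main obstacle.} The principal difficulty is showing that the divisor weight $\tau(|Q(z)-\la_l|)$ integrates to a $\la_l$-independent constant against $|K_{\la_l}|$: because the thickness $10^j$ of $K_{\la_l}$ around $\{Q=\la_l\}$ allows $Q(z)-\la_l$ to range over an interval of size $\sim \la_l^{(k-1)/k}10^j$ on the support, any naive pointwise bound $\tau(|Q(z)-\la_l|)\lesssim \la_l^\epsilon$ would destroy the estimate. The resolution is to partition the support of $K_{\la_l}$ into level sets of $Q$ and apply the averaged divisor estimate $\sum_{|m|\leq M}\tau(|m|)\lesssim M\log M$, balanced against the level-set $L^\infty$-size of $|K_{\la_l}|$ and the lattice point count on each level. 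Carrying this out uniformly in $\la_l$ and then summing the arithmetic content cleanly over $q\in I_j$ is the delicate technical heart of the argument.
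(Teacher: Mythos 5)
Your kernel-side approach — expanding $F_q(a,\bfa)$, applying orthogonality to produce Ramanujan sums $c_q((-1)^kQ(z)-\la_l)$, and summing to $C_j(m)=\sum_{q\in I_j}c_q(m)$ — is genuinely different from the route the paper takes, and the identities you write down are correct. The paper instead performs the completion at the \emph{multiplier} level via M\"obius inversion, writing $m_{\la_l,j}=\sum_{d}C_j(d)\Omega_{\la_l,j,d}$ with the scalar weight $|C_j(d)|\leq(2^j/d)\1_{d\leq 2^j}$. (Your $C_j(m)$ relates to theirs by $C_j(m)=\sum_{d\mid m}d\,C_j^{\rm paper}(d)$.) The crucial advantage of the multiplier-side completion is that the arithmetic weight depends only on $d$ and not on $z$ or $\la_l$, so one can immediately apply the triangle inequality over $d$ and reduce to a single maximal estimate for each completed multiplier $\Omega_{\la_l,j,d}$. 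Your kernel-side weight $C_j((-1)^kQ(z)-\la_l)$ depends on both $z$ and $\la_l$, and that is precisely what produces the obstacle you identify.

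That obstacle is a genuine gap, not merely a technical loose end. On the support of $K_{\la_l}$ the quantity $Q(z)-\la_l$ ranges over an interval of length $\sim \la_l^{(k-1)/k}10^j$, and the mean of $\tau$ on an interval of length $M$ is $\sim\log M$; so the level-set averaging you propose would at best replace the pointwise $\la_l^\eps$ loss by a $\log\la_l$ loss, which is still unbounded in $\la_l$ and still destroys the estimate. There is no obvious way to trade this against the Euclidean decay, because the divisor weight is uncorrelated with the smooth measure $\sigma_{\la_l}$. The passage in your closing paragraph where you sum $\sum_{q\in I_j}\tau(q)\lesssim 2^jj$ is a red herring: the divisor argument appears in the variable $m=Q(z)-\la_l$, not in $q$, so this sum does not appear in the estimate you need.

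Two further ingredients in the paper's proof are absent from your sketch. First, after factoring the completed multiplier as $\Omega_{\la_l,j,d}=v_{\la_l,d}\cdot u_{\la_l,d,j}$ (with $v_{\la_l,d}$ the arithmetic part, depending only on $\la_l\bmod d$), the convolution against the kernel of $v_{t,d}$ produces a supremum over shifts $r\in[d]^n$. The required Euclidean input is therefore not the unshifted Calder\'on--Coifman--Weiss / Duoandikoetxea--Rubio de Francia inequality, but a version uniform over the shifted operators $T_{\la,r}$ with $r\in[d]^n$ — this is the paper's Proposition~\ref{real}, whose proof (comparing $T_{\la_l,r}$ to $T_{\la_l,0}$, splitting frequencies, interpolating between an $L^{1+(p-1)/2}$ bound of size $d^n$ and an $L^2$ gain) is a real piece of work. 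Second, Proposition~\ref{real} only controls $\la_l\geq d^{C(p)}$, so the paper splits $\sum_d |C_j(d)|\sup_{\la_l}|\cdots|$ into the ranges $\la_l\geq d^{C(p)}$ and $\la_l<d^{C(p)}$; lacunarity of $\LL$ enters explicitly through the count $\#\{\la_l<d^{C(p)}\}\lesssim\log d\lesssim j$, which is exactly where the extra factor of $j$ making the bound $j^2 2^j$ rather than $j2^j$ arises. Your sketch does not explain where either factor of $j$ comes from, and it never uses the lacunarity of $\LL$ except implicitly through the Euclidean theorem.

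In short: the Ramanujan-sum reformulation is a legitimate alternative starting point, but as you yourself flag, it puts the divisor function $\tau(|Q(z)-\la_l|)$ directly into the kernel with no way to remove the $\log\la_l$ loss; the paper's route avoids this by completing in $d$ at the multiplier level and by separating the supremum into large/small $\la_l$ using lacunarity. To salvage your approach you would need to import exactly those two structural ideas, at which point the argument would essentially become the paper's.
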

To prove Theorem~\ref{1}, one interpolates the bounds from Lemma~\ref{lemma:l2approximation} and Theorem~\ref{theorem:new} and sums over $j$ to deduce the desired range of $p$. 
We carry this out momentarily after describing how we prove Theorem~\ref{theorem:new}. 

In order to prove the latter estimates near $\ell^1$,  we use a M\"obius inversion argument in Section~\ref{section:Mobius} to give a suitable rearrangement of the terms $m_{\la_l,j, q}$ as a divisor sum of complete exponential sum analogues of the $m_{\la_l,j, q}$. 
After summing over $q\in I_j$ we end up with a manageable expression for the multipliers $m_{\la_l,j}$. 
The use of the complete sum analogues dates back to Bourgain \cite{Boup>1}, and  in the current context these  are considered in \cite{Cook:sparse}. 

The presence of the characters $e(-a \la_l/q)$ in \eqref{2.1} introduces several difficulties in comparison with \cite{Cook:Birch}. 
In Section~\ref{section:discrete} we deal with these issues by comparison with a certain Euclidean maximal function result. 
The Euclidean maximal function is bounded in Section~\ref{section:Euclidean} and is based on a result in \cite{DR86}. 
The results of Section~\ref{section:Euclidean} only  treat  the maximal averages along suitable tails of $\LL$, and thus the results of Section~\ref{section:discrete} do as well.
As we shall see, what constitutes `the tail' is reasonable. 
And, thanks to the lacunary condition, this allows us to treat the maximal averages along the entire lacunary sequence $\LL$ in order to conclude the proof of Theorem~\ref{1} in the ultimate section. 

\begin{proof}[Proof of Theorem~\ref{1}]
Since the full maximal function is bounded on $\ell^2(\Z^n)$, it suffices to consider only $1<p<2$.
We have the maximal operator $M_*$ corresponding to the multipliers $m_{\la_l}(\xi)$ and the maximal operators $M_{*,j}$ corresponding to the multipliers $m_{\la_l,j}(\xi)$. 
The triangle inequality implies that 
\[
||M_*||_{\ell^p\to\ell^p}
\leq 
\sum_{j=1}^\infty||M_{*,j}||_{\ell^p\to\ell^p},
\]
so it suffices to show the bound
\[
||M_{*,j}||_{\ell^p\to\ell^p}
\<_\epsilon
2^{(\epsilon-\nu)j}
\]
for some $\nu>0$ depending on $p$ when $\frac{2\alpha-2}{2\alpha -3}<p<2$ and all sufficiently small $\epsilon>0$. 

Select $1<p_0<2$ close to 1. 
Choose an $\epsilon>0$ sufficiently small and select $\beta$ such that $\alpha-\epsilon/2 \leq \beta \leq \alpha$. 
Interpolating the bound on $\ell^2$ from Lemma~\ref{errors} with the $\ell^{p_0}$ bound from Theorem~\ref{theorem:new} we find that 
\[
\|M_{*,j}\|_{\ell^p\to\ell^p}
\<
2^{(\epsilon-\nu)j}
\]
for all sufficiently small $\epsilon>0$ where $-\nu := t - (2-\alpha)(1-t)$ with $t$ defined by the equation $\frac{1}{p} = \frac{t}{p_0} + \frac{1-t}{2}$. 
We see that $\nu$ is positive precisely when $t < (2-\alpha)(1-t)$. 
This is equivalent to $t>\frac{\alpha-2}{\alpha-1}$. 
This in turn occurs precisely when 
\[
\frac{1}{p}
< 
\frac{\frac{\alpha-2}{\alpha-1}}{p_0} + \frac{1-\frac{\alpha-2}{\alpha-1}}{2}.
\]
In other words, we require $p > \frac{2p_0(\alpha-1)}{2\alpha-4+p_0}$. 
Taking $p_0$ towards 1 we are allowed all $p$ such that 
$p > \frac{2(\alpha-1)}{2\alpha-3}$, as claimed. 
\end{proof}

\section{Completing the main terms by M\"obius inversion}\label{section:Mobius}

We use the approximations to the multipliers $\what(\xi)$ as defined above. However, the point of this section is to rearrange the functions $m_{\la_l,j}$ in terms of the completed  multipliers $\Omega_{\la,j,d}$, which are defined as
\[
\Omega_{\la,j,d}(\xi) 
= 
\sum_{a\in Z_d}\sum_{\bfa\in Z_d^n}F_d(a,\bfa) e(-\la a/d) \zeta(10^j(\xi-\bfa/d))\ds(\la^{1/k}(\xi-\bfa/d)).
\]
 This is done by an application of M\"obius inversion, giving a divisor sum which leads us to the constants 
\[
C_{j}(d) 
:= 
\sum_{h=1}^\infty\mu(h)\1_{I_j}(dh)
\] 
with $\mu$ the M\"obius function. 
By `completed' we are referring to the sum over $a$ in $\Omega_{\la,j,d}$ which is over the complete residue class $Z_d$ as opposed to $U_d$ in $m_{\la,j}$. 

These constants are bounded in terms of simple functions of $j$ and $d$\,: 
\begin{equation}
|C_{j}(d)| 
\leq 
(2^j/d) \,\1_{I_j}(d).
\end{equation}
To see this note that the bound on $|C_{j,d}|$ follows as 
\[
C_{j}(d) 
= 
\sum_{h=1}^\infty \mu(h)\1_{I_j}(dh)
\leq 
\left( \sum_{h\leq 2^j/d}1 \right) \1_{I_j}.
\]

To carry out the M\"obius inversion argument we need to consider a slightly  more  general class of exponential sums.  Set  \[
F(a,q,\bfa,\bfq)=L^{-n}\sum_{s\in Z^n_L}e(\p(s)a/q+s\cdot\bfa/\bfq),\]
where $\bfq=(q_1,...,q_n)$, $\bfa/\bfq=(a_1/q_1,...,a_n/q_n)$, and $L:=lcm(q,q_1,...,q_n)$ is the least common multiple of the set $\{q,q_1,...,q_n\}$. 
Notice that $F(a,q,\bfa,(q,...,q))=F_q(a,\bfa)$, so the case when $q=q_1=...=q_n$ simply gives us back our previously defined exponential sums.

An important observation concerning the exponential sums $F(a,q,\bfa,\bfq)$ is the following.
 
\begin{lemma}\label{3.1}
Let $q\geq 1$ be a given integer. If $ q_i$ does not divide $q$ for some $1\leq i\leq n$, then for any fixed $a\in U_q$ and any $\bfa\in U_{\bfq}$ we have
 \[F(a,q,\bfa,\bfq)=0.\]
\end{lemma}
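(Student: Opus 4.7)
The plan is to exploit a translation symmetry of the summation domain $Z_L^n$. Fix an index $i$ with $q_i \nmid q$, and consider the map $s \mapsto s + q e_i$ on $Z_L^n$, where $e_i$ is the standard basis vector. Since $q \mid L$ (as $L = \mathrm{lcm}(q, q_1, \dots, q_n)$), this shift descends to a bijection on $Z_L^n$. The strategy is to perform this change of variables in the defining sum and show that the polynomial phase is unaffected while the linear phase picks up a nontrivial constant factor, forcing the sum to vanish.

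First I would check that the polynomial phase $e(\p(s)a/q)$ is invariant under $s \mapsto s + qe_i$. By Taylor expansion,
\[
\p(s + qe_i) - \p(s) = \sum_{|\alpha| \geq 1} \frac{\partial^\alpha \p(s)}{\alpha!}\, q^{|\alpha|},
\]
and every term on the right is divisible by $q$. Consequently $\bigl(\p(s+qe_i) - \p(s)\bigr)\cdot a/q \in \Z$, so $e(\p(s+qe_i)a/q) = e(\p(s)a/q)$. Next I would compute the transformation of the linear phase $e(s \cdot \bfa/\bfq)$: the shift changes only the $i$-th coordinate, and the phase picks up exactly the factor $e(q a_i/q_i)$, independent of $s$. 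Pulling this constant out of the sum and using the bijectivity of the translation yields
\[
F(a,q,\bfa,\bfq) = e(q a_i/q_i) \cdot F(a,q,\bfa,\bfq).
\]

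To conclude, note that since $\bfa \in U_{\bfq}$ we have $\gcd(a_i, q_i) = 1$, and by hypothesis $q_i \nmid q$. Hence $q_i \nmid q a_i$, which means $q a_i/q_i \notin \Z$ and $e(q a_i/q_i) \neq 1$. The displayed identity then forces $F(a,q,\bfa,\bfq) = 0$, as claimed.

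There is no real obstacle here: the only point worth flagging carefully is the divisibility $q \mid \p(s+qe_i) - \p(s)$, which is a one-line consequence of Taylor expansion for an integral polynomial. All other steps are just bookkeeping with the bijection on $Z_L^n$ and the standard observation that a sum equal to a nontrivial root of unity times itself must vanish.
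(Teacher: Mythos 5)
Your proof is correct. The underlying mechanism is the same as the paper's — the translation $s \mapsto s + qe_i$ preserves the polynomial phase modulo $1$ (since $\p$ has integer coefficients and hence $q \mid \p(s+qe_i)-\p(s)$) but shifts the linear phase by a nontrivial root of unity $e(qa_i/q_i) \ne 1$, forcing the sum to vanish. The paper reaches this by peeling off the $s_1$-variable, reducing via periodicity to a sum over $Z_{Q_1}$ with $Q_1 = \mathrm{lcm}(q,q_1)$, parametrizing $s_1 = r + qt$ with $r\in Z_q$, $t\in Z_{p_1}$, and exhibiting the explicit inner geometric sum $\sum_{t\in Z_{p_1}} e(pta_1/p_1) = 0$; your version instead applies the translation symmetry globally on $Z_L^n$ and skips the parametrization entirely. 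The two proofs are the same idea at their core — the paper's vanishing geometric sum is itself proved by the shift $t\mapsto t+1$ — but your global presentation is cleaner, avoids the bookkeeping with $p$, $p_1$, $d$, and $Q_1$, and makes it transparent that the only facts used are $q\mid L$, integrality of $\p$, and $q_i\nmid qa_i$. One point worth being explicit about (you implicitly use it): the phase is genuinely well-defined as a function on $Z_L^n$, which holds because $q\mid L$ and $q_i\mid L$ for each $i$, so the shift by $qe_i$ really is a bijection of the summation domain.
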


\begin{proof}
Fix $q$, $\bfq$, $\bfa\in U_\bfq$, and $a\in Z_q$, $a\not=0$. 
Assume, without loss of generality, that $q_1$ does not divide $q$. 
Then we can write $q=pd$ and $q_1=p_1d$ where the greatest common divisor of $p \geq 1$ and $p_1 > 1$, denoted $\gcd(p,p_1)$, is $1$. 

Recall that $L$ is the least common multiple of all $q_1,\dots,q_n$ and $q$ so that 
\begin{align*}
\sum_{s\in Z_L^n} & e(\p(s)a/q+s\cdot \bfa/\bfq)
\\ & = 
\sum_{s_2,...,s_n\in Z_L} \left( \sum_{s_1\in Z_L} e(\p(s_1,...,s_n)a/q+s_1 a_1/q_1) \right) e(a_2 s_2/q_2+...+a_ns_n/q_n).
\end{align*}
Let $Q_1 = pp_1d$ denote the least common multiple of $q$ and $q_1$. 
The inner sum is a multiple of 
\[
\sum_{s_1\in Z_{Q_1}}e(\p(s_1,...,s_n)a/q+s_1 a_1/q_1)
\]
since $Q_1|L$ and the phase is periodic modulo $Q_1$. 
The sum over $s_1\in Z_{Q_1}$ can be written as a sum of $r+qt$ over $r\in Z_q$ and $t\in Z_{p_1}$, giving 
\begin{align*}
\sum_{s_1\in Z_{Q_1}}e(\p(s_1,...,s_n)a/q+s_1 a_1/q_1) 
& = \sum_{r\in Z_q}\sum_{t\in Z_{p_1}}e(\p(r+qt,...,s_n)a/q+(r+qt) a_1/q_1)
\\ & = \sum_{r\in Z_q}e(\p(r,s_2,...,s_n)a/q)e(ra_1/q_1)\left(\sum_{t\in Z_{p_1}}e(pt a_1/p_1)\right).
\end{align*}
The result follows as 
\[
\sum_{t\in Z_{p_1}}e(qt a_1/q_1) 
= 
\sum_{t\in Z_{p_1}}e(pt a_1/p_1)=0
\]
because $(a_1p,p_1)=1$.
\end{proof}

We now come to the main result of this section. 
\begin{lemma}
For a fixed integer $q\geq1$ we have that 
\eq\label{3.2}
m_{\la_l,j,q}(\xi)=\sum_{d|q}\mu(q/d) \Omega_{\la_l,j,d}(\xi),\ee
which in turn gives 
\[
m_{\la_l,j}(\xi) 
= 
\sum_{d=1}^\infty C_{j}(d)\Omega_{\la_l,j,d}(\xi) .
\]
\end{lemma}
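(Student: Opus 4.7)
The plan is to reduce the first identity \eqref{3.2} to the equivalent divisor-sum identity
\[
\Omega_{\la_l, j, q}(\xi) = \sum_{d \mid q} m_{\la_l, j, d}(\xi),
\]
from which \eqref{3.2} follows by standard M\"obius inversion on the divisor lattice of $q$. Granted this, the formula for $m_{\la_l, j}(\xi)$ comes by summing \eqref{3.2} over $q \in I_j$ and exchanging the order of summation via $q = dh$: the inner $h$-sum against $\mu(h)\mathbf{1}_{I_j}(dh)$ then reproduces exactly the constant $C_j(d)$ defined at the start of this section.

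To prove the key identity, I would start from $\Omega_{\la_l, j, q}(\xi)$ and partition its sum over $a \in Z_q$ according to the divisor $d := q/\gcd(a,q)$. Writing $a = (q/d)\, a'$ with $a' \in U_d$, the character $e(-\la_l a/q)$ collapses to $e(-\la_l a'/d)$, leaving an inner sum over $\bfa \in Z_q^n$ that must be identified with the $\bfc$-sum in $m_{\la_l, j, d}$. For this, I would further decompose each $\bfa \in Z_q^n$ componentwise into its reduced form $\bfa_i/q = b_i/q_i'$ with $q_i' \mid q$ and $b_i \in U_{q_i'}$. By the periodicity of the phase modulo $L := \mathrm{lcm}(d, q_1', \dots, q_n')$, a direct computation gives
\[
F_q\bigl((q/d)\, a', \bfa\bigr) = F(a', d, \bfb, \bfq'),
\]
the generalized exponential sum introduced just before Lemma~\ref{3.1}.

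Now I would invoke Lemma~\ref{3.1}: since $a' \in U_d$ and $\bfb \in U_{\bfq'}$, this sum vanishes unless $q_i' \mid d$ for every $i$. That divisibility is equivalent to $(q/d) \mid \bfa_i$ for every $i$, so the surviving $\bfa$ are in bijection with $\bfc \in Z_d^n$ via $\bfa = (q/d)\bfc$, under which $\bfa/q = \bfc/d$ in $\T^n$. When this holds, $L$ collapses to $d$ and $F(a', d, \bfb, \bfq')$ collapses to $F_d(a', \bfc)$. The surviving summand then reads exactly
\[
F_d(a', \bfc)\, e(-\la_l a'/d)\, \zeta(10^j(\xi - \bfc/d))\, \ds(\la_l^{1/k}(\xi - \bfc/d)),
\]
which is the summand defining $m_{\la_l, j, d}(\xi)$; summing over $a' \in U_d$ and $\bfc \in Z_d^n$ yields the divisor identity, and M\"obius inversion closes the first claim.

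The main obstacle I expect is the bookkeeping in the $\bfa$-decomposition: matching the componentwise denominators $\bfq'$ with the scalar $d$, verifying that Lemma~\ref{3.1} applies with the correct identifications $a' \in U_d$ and $\bfb \in U_{\bfq'}$, and confirming that the collapse $L = d$ genuinely reduces $F(a', d, \bfb, \bfq')$ to $F_d(a', \bfc)$ under the bijection $\bfc = (d/q)\bfa$. Once these compatibility checks are in place, the M\"obius inversion step and the order-exchange rearrangement for $m_{\la_l, j}(\xi)$ are routine.
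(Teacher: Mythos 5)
Your proof is correct and follows essentially the same approach as the paper: reduce \eqref{3.2} to the divisor-sum identity $\Omega_{\la_l,j,q}=\sum_{d\mid q}m_{\la_l,j,d}$ via M\"obius inversion, prove that identity by decomposing the fractions $\bfa/q$ componentwise into lowest terms and invoking Lemma~\ref{3.1} to control which terms survive, then exchange the order of summation to obtain $C_j(d)$. The only difference is cosmetic — you expand $\Omega_{\la_l,j,q}$ and drop the vanishing terms, whereas the paper starts from $\sum_{d\mid q}m_{\la_l,j,d}$, inserts the vanishing terms, and recombines into $\Omega_{\la_l,j,q}$.
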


\begin{proof}
Fix a generic  $\la \in \Gamma$ and consider $m_{\la,q,j}(\xi)$ for a fixed $q$. 
By the M\"obius inversion formula  the first claim is equivalent to 
\[
\sum_{d|q} m_{\la,j,d}(\xi)=\Omega_{\la,j,q}(\xi).
\]
Hence we consider
\[
\sum_{d|q}\sum_{a \in U_d}\sum_{\bfa\in Z_d^n}F_d(a,\bfa) e(-a \la/d)\zeta(10^j(\xi-\bfa/d))\ds(\la^{1/k}(\xi-\bfa/d)).
\]
By reducing fractions this is simply 
\[
\sum_{d|q}\sum_{a \in U_d}\sum_{\mathbf{d}|d}\sum_{\bfa\in U_d^n}F(a,d,\bfa,\mathbf{d}) e(-a \la/d)\zeta(10^j(\xi-\bfa/\mathbf{d}))\ds(\la^{1/k}(\xi-\bfa/\mathbf{d})).
\]
Here $\mathbf{d}=(d_1,...,d_n)$ and by $\mathbf{d}|d$ we mean that $d_i|d$ for each $i$. Applying Lemma \ref{3.1} lets us write this as  \[
\sum_{d|q}\sum_{a \in U_d}\sum_{\mathbf{d}|q}\sum_{\bfa\in U_q^n}F(a,d,\bfa,\mathbf{d}) e(-a \la/d)\zeta(10^j(\xi-\bfa/\mathbf{d}))\ds(\la^{1/k}(\xi-\bfa/\mathbf{d})),\]
as each of the terms with $\mathbf{d}|q$ and $d_i \nmid \, d$ for some $i$ do not contribute anything. This last expression is  \[
\sum_{a \in Z_q}\sum_{\bfa\in Z_q^n}F(a,q,\bfa,\mathbf{q}) e(-a \la/q)\zeta(10^j(\xi-\bfa/\mathbf{q}))\ds(\la^{1/k}(\xi-\bfa/\mathbf{q})),\]
which is just\[
\sum_{a \in Z_q}\sum_{\bfa\in Z_q^n}F_q(a,\bfa) e(-a \la/a)\zeta(10^j(\xi-\bfa/\mathbf{q}))\ds(\la^{1/k}(\xi-\bfa/\mathbf{q})).\]
This proves the first claim.

We now continue by summing over $q\in I_j$, getting
\begin{align*}
\sum_{q\in I_j} & \sum_{d|q} \mu(q/d) \Omega_{\la_l,j,d}(\xi)
\\ & = 
\sum_{q=1}^\infty\sum_{d=1}^\infty \1_{I_j}(q) \1_{d|q} \mu(q/d) \Omega_{\la_l,j,d}(\xi)
\\ & = 
\sum_{d=1}^\infty\sum_{q=1}^\infty \1_{I_j}(q) \1_{d|q} \mu(q/d) \Omega_{\la_l,j,d}(\xi)
\\ & = 
\sum_{d=1}^\infty\sum_{h=1}^\infty \1_{I_j}(dh) \mu(h) \Omega_{\la_l,j,d}(\xi)
\\ & =
\sum_{d=1}^\infty C_{j}(d) \Omega_{\la_l,j,d}(\xi),
\end{align*}
which is the second claim. 
\end{proof}

Before we close this section it is worth pointing out a few  corollaries of Lemma \ref{3.1} which will be used shortly. 
In the statement we use the notation $V_{\la}(d)$ for the set $\{ s\in Z_d^n: Q(s)=\la\}$.

\begin{cor}
For a given integer $q\geq 1$ we have the following estimates:
\eq\label{F}
\sum_{d|q}\mu(q/d)d^{1-n}|V_{\la}(d)|=\sum_{d|q}\mu(q/d)\left(d^{1-n}|V_{\la}(d)|-1\right)=\sum_{a\in U_q}F_q(a,0)e(-\la a/q),\ee
\eq\label{U}
q^{1-n}|V_{\la}(q)|=\sum_{d|q}\sum_{a\in U_d}F_d(a,0)e(-\la a/d),\ee
and 
\eq\label{C}
q^{1-n}|V_{\la}(q)|\<1\ee
Also, for $t\in U_q$ we also have
\eq\label{K}
|F_q(t,0)|\<\sup_{r\in Z_q}\left|\sum_{d|q}\mu(q/d)\left(d^{1-n}|V_{r}(d)|-1\right)\right|.\ee
\end{cor}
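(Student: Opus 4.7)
The four claims follow by a combination of the geometric series identity, M\"obius inversion, and Fourier inversion on $Z_q$; Lemma~\ref{3.1} itself only enters implicitly through the behavior of the Weyl sums when one rescales by denominators, but the bulk of the work is elementary manipulation.

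First I would establish \eqref{U}. Starting from the orthogonality relation $\sum_{a\in Z_q}e((Q(s)-\la)a/q)=q\cdot \1_{Q(s)\equiv \la\pmod{q}}$, one obtains
\[
q^{1-n}|V_\la(q)|
=
q^{-n}\sum_{s\in Z_q^n}\sum_{a\in Z_q}e((Q(s)-\la)a/q)
=
\sum_{a\in Z_q}F_q(a,0)e(-\la a/q).
\]
Grouping each $a\in Z_q$ by the reduced denominator $d=q/\gcd(a,q)$, I would write $a=(q/d)a'$ with $a'\in U_d$, and use periodicity of $Q(s)a'/d$ modulo $d$ to check that $F_q((q/d)a',0)=F_d(a',0)$. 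This turns the single sum over $Z_q$ into the double divisor sum $\sum_{d|q}\sum_{a'\in U_d}F_d(a',0)e(-\la a'/d)$, which is \eqref{U}.

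Next, \eqref{F} is a direct application of M\"obius inversion to \eqref{U}: setting $g(q):=q^{1-n}|V_\la(q)|$ and $f(d):=\sum_{a\in U_d}F_d(a,0)e(-\la a/d)$, the relation $g=\sum_{d|\cdot} f$ inverts to $f=\sum_{d|\cdot}\mu(q/d)g$. The second equality in \eqref{F} follows because $\sum_{d|q}\mu(q/d)=\1_{q=1}$, so the extra additive $-1$ contributes nothing for $q\geq 2$ (and the $q=1$ case is vacuous after accounting for the convention $U_1=Z_1=\{0\}$). For \eqref{C} I would use the definition of $\alpha_Q$ in \eqref{eq:alpha} together with the hypothesis $\alpha_Q>2$: from \eqref{U} and the trivial bound $|U_d|\leq d$, one gets
\[
q^{1-n}|V_\la(q)|
\leq
\sum_{d|q}d\cdot \sup_{a\in U_d}|F_d(a,0)|
\<_{\eps}
\sum_{d|q}d^{1-\alpha_Q+\eps}
\leq
\sum_{d=1}^\infty d^{1-\alpha_Q+\eps}
\<
1
\]
for $\eps$ small enough that $\alpha_Q-\eps>2$, giving the uniform bound.

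Finally, \eqref{K} uses Fourier inversion on $Z_q$ to extract the $a=t$ Weyl sum. Writing $S(r):=\sum_{a\in U_q}F_q(a,0)e(-ra/q)$ and using orthogonality $q^{-1}\sum_{r\in Z_q}e((t-a)r/q)=\1_{a\equiv t\pmod q}$, I obtain
\[
F_q(t,0)
=
\frac{1}{q}\sum_{r\in Z_q}S(r)\,e(tr/q)
\quad\text{for }t\in U_q,
\]
hence $|F_q(t,0)|\leq \sup_{r\in Z_q}|S(r)|$. Replacing $\la$ by $r$ in \eqref{F} identifies $|S(r)|$ with $|\sum_{d|q}\mu(q/d)(d^{1-n}|V_r(d)|-1)|$, which yields \eqref{K}. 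The only substantive step is \eqref{C}, where one must invoke the quantitative Weyl-sum bound associated with $\alpha_Q>2$; the remaining three identities are formal once the correct orthogonality rearrangement is set up.
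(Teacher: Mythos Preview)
Your proof is correct and follows essentially the same route as the paper. The paper obtains \eqref{F} and \eqref{U} by evaluating the multiplier identity \eqref{3.2} at $\xi=0$ (where only the $\bfa=0$ terms survive), which amounts to exactly the orthogonality-plus-regrouping computation you write out directly; your argument is slightly more self-contained since the $\bfa=0$ specialization does not actually require Lemma~\ref{3.1}. Your treatments of \eqref{C} (via \eqref{U} and the Weyl-sum decay $\alpha_Q>2$) and \eqref{K} (via Fourier inversion on $Z_q$ applied to \eqref{F}) match the paper's verbatim. One triviality: for $q=1$ the middle expression in \eqref{F} equals $0$ while the outer two equal $1$, so the second equality there only holds for $q\geq 2$; your remark that this case is ``vacuous'' is not quite the right word, but the discrepancy is harmless for the applications.
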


The statements \eqref{F} and  \eqref{U} occur in standard treatments of the singular series and here they follow from direct evaluation of \eqref{3.2} at $\xi=0$. 
Equation \eqref{C} is also known, but follows easily enough from \eqref{F} using the estimates $|F_q(a,0)|\<_\epsilon q^{-2+\eps}$ for all $\epsilon>0$; this estimate is used in the Section~\ref{section:discrete}. 
The last statement should be known but we have not found it in the literature. 
The last estimates follows by multiplying both sides of \eqref{F} by $e(r\la/q)$,  summing in $\la\in Z_q$, and then using the trivial upper bound on the result from left hand side of \eqref{F}.

\section{The discrete maximal function estimate}\label{section:discrete}

We begin by stating a required real analytic maximal function estimate. For this, fix an integer $d\geq1$ and define the averages 
\[
T_{\la,r} f(y) 
:= 
\int_{\R^n}\widetilde{f}(\xi)\ds(\la^{1/k}\xi)e(-(y-r)\cdot\xi) \,d\xi
\]
and the tail maximal operators 
\[
T^{\kappa}_* f(y) 
= 
\sup_{\la_l \in \LL : \lambda_l \geq \kappa}\sup_{r\in[d]^n}|T_{\la,r} f(y)|
\]
for smooth, compactly supported functions on $\R^n$ where $[d] := \{0,1,...,d-1\}$. 
We also define the tail maximal operators
\[
T^{\kappa}_{*,r} f(y) 
= \sup_{\la_l \in \LL : \lambda_l \geq \kappa}|T_{\la,r} f(y)|.
\]
Note that 
\[
T^{\kappa}_* f(y) 
= \sup_{r\in[d]^n}T^{\kappa}_{*,r} f(y).
\]
For these operators we have $L^p$ boundedness property. 
\begin{prop}\label{real}
For each $p>1$, there exists a positive constant $C(p)$, independent of $d \in \N$, such that 
\[
\|T_*^{d^{C(p)}}f\|_{L^p(\R^n)} 
\< 
\|f\|_{L^p(\R^n)}
\]
with an implied constant independent of $d$.
\end{prop}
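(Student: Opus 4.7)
The plan is a frequency decomposition at scale $1/d$. Fix a smooth radial bump $\phi$ on $\R^n$ with $\phi \equiv 1$ on $\{|\xi| \leq 1/2\}$ and supported on $\{|\xi| \leq 1\}$, and split $T_{\la, r} = T^{\mathrm{lo}}_{\la, r} + T^{\mathrm{hi}}_{\la, r}$ according to the Fourier multipliers $\ds(\la^{1/k}\xi)\phi(d\xi)e(-(y-r)\cdot\xi)$ and $\ds(\la^{1/k}\xi)(1 - \phi(d\xi))e(-(y-r)\cdot\xi)$. I will use the translation identity $T_{\la, r}f(y) = T_{\la, 0}f(y - r)$ throughout. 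The tail condition $\la \geq d^{C(p)}$ serves two complementary purposes: on the high-frequency side it will convert the $d^n$-loss coming from the $[d]^n$-supremum into an absorbable error, and on the low-frequency side it keeps the multipliers within the regime where a lacunary maximal theorem applies.

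For the low-frequency piece, $T^{\mathrm{lo}}_{\la, 0}f$ has Fourier support in $\{|\xi| \leq 1/d\}$, so a standard reproducing-kernel/Bernstein argument yields
\[
\sup_{r \in [d]^n}\bigl|T^{\mathrm{lo}}_{\la, 0}f(y - r)\bigr| \lesssim M\bigl(T^{\mathrm{lo}}_{\la, 0}f\bigr)(y),
\]
where $M$ is the Hardy--Littlewood maximal operator; concretely, $g = g * K_d$ with $|K_d(w)| \lesssim_N d^{-n}(1 + |w|/d)^{-N}$, and this kernel remains comparable after shifting by $|r| \leq d\sqrt{n}$. Pulling the supremum over $\la$ inside $M$ by sublinearity reduces matters to showing that $\sup_{\la \geq \kappa}|T^{\mathrm{lo}}_{\la, 0}f|$ is $L^p$-bounded uniformly in $d$. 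This follows from the Duoandikoetxea--Rubio de Francia theorem \cite{DR86} applied to the measures with Fourier transforms $\ds(\la^{1/k}\xi)\phi(d\xi)$: the required uniform $L^\infty$ bound, the stationary-phase decay $|\ds(\xi)| \lesssim |\xi|^{-\alpha}$, and a H\"older-type modulus at $\xi = 0$ all hold with constants independent of $d$ provided $d \leq \la^{1/k}$, which is ensured by taking $C(p) \geq k$.

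For the high-frequency piece, on the support of $1 - \phi(d\xi)$ one has $|\xi| \geq 1/(2d)$, so $\la^{1/k}|\xi| \geq \la^{1/k}/(2d) \geq 1$ throughout the tail. Stationary-phase decay then yields $\|T^{\mathrm{hi}}_\la\|_{L^2 \to L^2} \lesssim (\la^{1/k}/d)^{-\alpha}$, while on $L^1$ the decomposition $T^{\mathrm{hi}}_\la = T_{\la, 0} - T^{\mathrm{lo}}_\la$ gives a uniform bound, both summands being convolutions with kernels of uniformly bounded total mass. Riesz--Thorin interpolation yields
\[
\|T^{\mathrm{hi}}_\la f\|_{L^p} \lesssim (\la^{1/k}/d)^{-\alpha\theta(p)}\|f\|_{L^p}, \qquad \theta(p) = 2(1 - 1/p),
\]
for $p \in (1, 2)$. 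Combining with the crude bound $\|\sup_r |T^{\mathrm{hi}}_{\la, r}f|\|_{L^p}^p \leq d^n \|T^{\mathrm{hi}}_\la f\|_{L^p}^p$ and summing $p$-th powers over the lacunary tail produces a geometric series dominated by $d^{n + \alpha\theta(p)p}\kappa^{-\alpha\theta(p)p/k}\|f\|_{L^p}^p$. Choosing $C(p)$ large enough (depending on $n, k, \alpha, p$) renders this $\lesssim \|f\|_{L^p}^p$.

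The main obstacle is precisely the balancing act between the two sides: the low-frequency argument needs $d \leq \la^{1/k}$ in order that the constants in DR86 be uniform in $d$, while the high-frequency argument needs $\la^{1/k}/d$ so large that the operator-norm decay defeats the $d^n$-loss from the $[d]^n$-supremum. Both requirements are met simultaneously by taking $C(p)$ sufficiently large, with the required $C(p)$ blowing up as $p \downarrow 1$ because $\theta(p) \to 0$.
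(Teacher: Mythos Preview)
Your argument is correct and reaches the same conclusion as the paper, but the decomposition is organised differently. The paper approximates $T_{\la_l,r}$ by the unshifted operator $T_{\la_l,0}$ and controls the \emph{difference}: the maximal function $T^{\kappa}_{*,0}$ is bounded directly by Lemma~\ref{akos} and \cite{DR86}, while for $T_{\la_l,r}-T_{\la_l,0}$ (whose multiplier carries the factor $1-e(-r\cdot\xi)$) the paper proves a crude $O(d^n)$ bound near $L^1$ and a good $L^2$ bound via an internal frequency split (using $|1-e(-r\cdot\xi)|\le d|\xi|$ for small $\xi$ and the decay of $\ds$ for large $\xi$), then interpolates. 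You instead make the frequency split at scale $1/d$ the primary decomposition: the low-frequency piece is handled without any interpolation, via the Bernstein/Hardy--Littlewood domination of shifts by $r\in[d]^n$ followed by \cite{DR86}; only the high-frequency piece requires interpolation, between a trivial $L^1$ bound (bounded total variation of the kernels) and the $L^2$ decay of $\ds$. Both routes ultimately use the same three ingredients (\cite{DR86}, the decay estimate of Lemma~\ref{akos}, and the fact that frequencies $\le 1/d$ are insensitive to translations by $O(d)$), so the difference is really in the packaging. Your version is arguably cleaner in that it isolates the $d^n$ loss entirely on the high-frequency side, whereas the paper's version has the virtue of matching the ``approximate, then interpolate the error'' template of Lemma~\ref{errors} used elsewhere in the paper. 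One small simplification: rather than applying \cite{DR86} to the $d$-dependent family with multipliers $\ds(\la^{1/k}\xi)\phi(d\xi)$ and checking uniformity of the constants, it is cleaner to write $T^{\mathrm{lo}}_{\la,0}f = T_{\la,0}(P_d f)$ with $P_d$ the Fourier cutoff $\phi(d\cdot)$, apply \cite{DR86} to the $d$-independent family $\{T_{\la,0}\}$, and then use $\|P_d f\|_{L^p}\lesssim\|f\|_{L^p}$ uniformly in $d$; this makes the constraint $d\le\la^{1/k}$ unnecessary for the low-frequency piece. (Also, to avoid clashing with the paper's $\alpha_Q$, your decay exponent should be written as $K$ as in Lemma~\ref{akos}.)
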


This result is enough for us to deduce the required discrete maximal function inequality, which is the main goal of the current section.

\begin{prop}\label{discrete}
Let $p$ and $d\geq 1$ be given, and fix $j\geq1$. 
If $C(p)>0$ is as stated in Proposition \ref{real}, then 
\[
\| \sup_{\la_l\geq d^{C(p)}}|\mathscr{F}^{-1}( {\Omega_{\la_l,j,d}} \cdot \hat{f})| \|_{\ell^p} 
\< 
||f||_{\ell^p},
\]
with an implied constant independent of $d$.
\end{prop}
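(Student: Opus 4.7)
The strategy is a Magyar--Stein--Wainger style sampling argument to transfer the Euclidean maximal inequality of Proposition~\ref{real} to the discrete setting. First I would compute the $\Z^n$-inverse Fourier transform of $\Omega_{\la, j, d}$ explicitly. Using the orthogonality
\[
\sum_{a \in Z_d} e((Q(s) - \la) a/d) = d \cdot \mathbf{1}[Q(s) \equiv \la \bmod d]
\]
to collapse the sum over $a$ in the definition of $F_d(a, \bfa)$, together with the fact that the translates $\zeta(10^j(\xi - \bfa/d))$ indexed by $\bfa \in Z_d^n$ have pairwise disjoint supports on $\T^n$ (since $10^j > 2d$ whenever $d \in I_j$), one obtains the factorization
\[
\mathscr{F}^{-1}\Omega_{\la, j, d}(v) = d \cdot K_{\la, j}(v) \cdot \mathbf{1}[Q(v) \equiv \la \bmod d],
\]
where $K_{\la, j}$ is the $\R^n$-inverse Fourier transform of the Euclidean symbol $m_\la(\xi) := \zeta(10^j\xi)\ds(\la^{1/k}\xi)$, and $T^{(c)}_\la$ denotes the $\R^n$ Fourier multiplier operator with this symbol.

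Partitioning the set $\{v : Q(v) \equiv \la \bmod d\}$ into the disjoint cosets $s + d\Z^n$ for $s \in V_\la(d)$ yields
\[
M_{\la, j, d} f(y) = d \sum_{s \in V_\la(d)} \sum_{v \in s + d\Z^n} K_{\la, j}(v) f(y - v).
\]
I would then transfer each coset piece to the continuous setting. Let $\Phi g$ denote the Plancherel--P\'olya extension of $g \colon \Z^n \to \C$ defined by $\widetilde{\Phi g}(\eta) := \hat g(\eta) \mathbf{1}_{[-1/2, 1/2]^n}(\eta)$; it satisfies $\|\Phi g\|_{L^p(\R^n)} \lesssim_p \|g\|_{\ell^p(\Z^n)}$ and $\Phi g(y) = g(y)$ for $y \in \Z^n$. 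Since $m_\la$ has support strictly inside $[-1/2, 1/2]^n$, one readily verifies that at $y \in \Z^n$,
\[
\sum_{v \in s + d\Z^n} K_{\la, j}(v) f(y - v) = T^{(c)}_\la \Phi f_{(y - s) \bmod d}(y), \qquad f_t(x) := f(x)\mathbf{1}[x \equiv t \bmod d].
\]
The reverse Plancherel--P\'olya inequality $\sum_{y \in \Z^n}|G(y)|^p \lesssim \|G\|_{L^p(\R^n)}^p$ for $G$ band-limited to $[-1/2, 1/2]^n$, together with the standard enlargement argument for suprema of families of such $G$, then lets us pass from $L^p(\R^n)$ estimates back to $\ell^p(\Z^n)$.

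The main obstacle, and where the argument is most delicate, is to bound
\[
\Big\| \sup_{\la_l \geq d^{C(p)}} \Big| d \sum_{s \in V_\la(d)} T^{(c)}_\la \Phi f_{(y-s) \bmod d}(y) \Big| \Big\|_{\ell^p(\Z^n)}
\]
without losing factors of $d$. A naive triangle inequality across $s \in V_\la(d)$ costs a factor of $d \cdot |V_\la(d)| \lesssim d^n$, which is unacceptable. The essential tool for avoiding this loss is the supremum over translations $r \in [d]^n$ built into the Euclidean maximal operator $T^{\kappa}_*$ in Proposition~\ref{real}. The sum $\sum_{s \in V_\la(d)} T^{(c)}_\la \Phi f_{(y-s) \bmod d}(y)$ can be rewritten as $T^{(c)}_\la \Phi\bigl[ f \cdot \mathbf{1}[Q(y - \cdot) \equiv \la \bmod d] \bigr](y)$, and the $y$-dependent indicator on the input is absorbed by the supremum over $r \in [d]^n$ in $T^\kappa_*$. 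The tail hypothesis $\la_l \geq d^{C(p)}$, with $C(p)$ as provided by Proposition~\ref{real}, is precisely what lets the remaining polynomial factors of $d$ be swallowed into the Euclidean maximal bound. Invoking Proposition~\ref{real} and the extension estimate $\|\Phi f\|_{L^p(\R^n)} \asymp \|f\|_{\ell^p(\Z^n)}$, one arrives at
\[
\Big\| \sup_{\la_l \geq d^{C(p)}} |\mathscr{F}^{-1}(\Omega_{\la_l, j, d} \hat f)| \Big\|_{\ell^p(\Z^n)} \lesssim \|f\|_{\ell^p(\Z^n)}
\]
with a constant independent of $d$, as required.
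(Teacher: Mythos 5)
Your computation of the inverse Fourier transform, $\mathscr{F}^{-1}\Omega_{\la, j, d}(v) = d\, K_{\la, j}(v)\, \mathbf{1}[Q(v) \equiv \la \bmod d]$, is correct, and the Plancherel--P\'olya / MSW transference framework you invoke is the right one. However, the step you flag as ``the main obstacle'' is where the argument breaks, and your proposed resolution does not close the gap. The assertion that the $y$-dependent indicator $\mathbf{1}[Q(y-\cdot)\equiv\la\bmod d]$ on the input function is ``absorbed by the supremum over $r\in[d]^n$ in $T^{\kappa}_*$'' is not justified: in $T^{\kappa}_*$, the supremum over $r$ translates the evaluation point of a Fourier multiplier operator acting on a fixed input; it does not allow a $y$-dependent cutoff on the input, and it certainly does not collapse a sum over the $\asymp d^{n-1}$ residue classes in $V_\la(d)$ (multiplied by the prefactor $d$) into a bounded quantity. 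Moreover, Proposition~\ref{real} produces a constant that is uniform in $d$, not one decaying in $d$, so the tail restriction $\la_l\geq d^{C(p)}$ cannot be used to ``swallow'' any surviving polynomial factors of $d$; that restriction is consumed internally in the proof of Proposition~\ref{real} itself.

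The mechanism the paper uses to avoid the $d\cdot|V_\la(d)|\asymp d^n$ loss is different in structure and relies on ingredients your proposal does not have. First, the paper does not collapse $\Omega_{\la,j,d}$ all the way to a single kernel; it factors $\Omega_{\la,j,d}=v_{t,d}\cdot u_{\la,j,d}$, isolating a scale-$d$ arithmetic piece $v_{t,d}$ (where $t=\la\bmod d$, so $v$ depends on $\la$ only through its residue) from the surface-measure piece $u_{\la,j,d}$. Second, it applies a Fefferman--Stein type domination $\sup_\la|V_{t,d}*U_{\la,j,d}*f|\leq|V_{t,d}|*\bigl(\sup_\la|U_{\la,j,d}*f|\bigr)$, separating the two pieces before the maximal function acts. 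Third, and crucially, it applies H\"older's inequality against the kernel $|V_{t,d}|$, treating it as a near-probability measure; the uniform-in-$d$ bound on its mass is exactly the arithmetic input \eqref{C}, $d^{1-n}|V_\la(d)|\lesssim 1$, which appears nowhere in your outline. The supremum over $r\in[d]^n$ enters only afterward, when a \emph{weighted average} of $|U_{*,j,d}f(y-dl-r)|^p$ over $\{r\in[d]^n: Q(r)\equiv t\bmod d\}$ (normalized by $d^{1-n}$, again via \eqref{C}) is dominated by $\sup_{r\in[d]^n}|U_{*,j,d}f(y-dl-r)|^p$; it is \emph{not} the device that kills the $d^n$ factor. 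Without the factorization, the H\"older step, and \eqref{C} or some substitute, your argument as written incurs an unaffordable polynomial loss in $d$.
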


\begin{proof}[Proof of Proposition~\ref{discrete} assuming Proposition~\ref{real}]
Fix $p$, $d$, and $j$ and let $C = C(p)$ be given from Proposition~\ref{real}.
Begin by factoring the function $\Omega_{\la_l,j,d}(\xi)$ in the usual way: 
\begin{align*}
\Omega_{\la_l,j,d}(\xi) 
& = \sum_{a\in Z_d}\sum_{\bfa\in Z_d^n}F_d(a,\bfa) e(-\la_l a/d) \zeta(10^j(\xi-\bfa/d))\ds(\la_l^{1/2}(\xi-\bfa/d))
\\ & = 
\left(\sum_{a\in Z_d}\sum_{\bfa\in Z_d^n}F_d(a,\bfa) e(-\la_l a/d) \zeta(d(\xi-\bfa/d)) \right)\left(\sum_{\bfa\in Z_d^n} \zeta(10^j(\xi-\bfa/d)) \ds(\la_{l}^{1/k}(\xi-\bfa/d))\right)
\\ & =: 
v_{\la_l,d}(\xi) \cdot u_{\la_l,d,j}(\xi).
\end{align*}
Then
\begin{align*}
\sup_{\la_l\geq d^C}|\mathscr{F}^{-1}( \Omega_{\la_l,j,d}\widehat{f})|
& = 
\sup_{\la_l\geq d^C}|\mathscr{F}^{-1}(\widehat{f}\, u_{\la_l,d,j}v_{\la_l,d} )|
\\ & = 
\sup_{t\in [d]^n} \sup_{\la_l\geq d^C;\, \la_l\equiv t \mod(d) } |\mathscr{F}^{-1}(\widehat{f}\, u_{\la_l,d,j}v_{t,d} )|.
\end{align*}
For a fixed $t$ we  enlarge the supremum in $\la_l$ by removing the congruence condition to get 
\[
\sup_{\la_l\geq d^C}|\mathscr{F}^{-1}( \Omega_{\la_l,j,d}\widehat{f})|
\leq 
\sup_{t\in [d]^n} \sup_{\la_l\geq d^C }|\mathscr{F}^{-1}(\widehat{f} \,u_{\la_l,d,j}v_{t,d} )|.
\]

The last expression can be written in terms of convolutions for which we have convolution kernels $U_{\la_l,j,d}$ and $V_{t,d}$ associated to the multipliers $u_{\la_l,d,j}$ and $v_{t,d}$ respectively. 
Then 
\begin{align*}
\sup_{\la_l\geq d^C}|\mathscr{F}^{-1}( \Omega_{\la_l,j,d}\widehat{f})|
& \leq 
\sup_{t\in [d]^n}\sup_{\la_{l}\geq d^c}|V_{t,d}*U_{\la_l,j,d}*f  )| 
\\ & \leq \sup_{t\in [d]^n}|V_{t,d}|*(\sup_{\la_{l}\geq d^C}|U_{\la_l,j,d}*f|  )
\\ & = \sup_{t\in [d]^n} |V_{t,d}| * U_{*,j,d}f
\end{align*}
where, with an abuse of notation, $U_{*,j,d}f$ denotes the maximal operator defined by  
\[
\sup_{\la_{l}\geq d^C}|U_{\la_l,j,d}*f|.
\]

We can evaluate the convolution kernel $V_{t,d}$ directly, getting 
\[
V_{t,d}(x) 
= 
d\1_{Q(x)\equiv t \mod(d)} \int_{[-1/2,1/2]^n}\zeta(d\xi)e(-x\cdot\xi)d\xi.
\]
With the bound 
\[
\int_{[-1/2,1/2]^n}\zeta(\xi)e(-x\cdot\xi)d\xi\<\frac{ 1 }{(n^{1/2}+1+|x|)^{n+1}},
\]
we see that
\[
|V_{t,d}(x)| 
\< 
\frac{d\1_{Q(x)\equiv t \mod(d)}(x)}{d^n(n^{1/2}+1+|x|/d)^{n+1}}.
\]
This gives
\[
|V_{t,d}|*U_{*,j,d}f 
\< 
\sum_{x\in\Z^n}\frac{d\1_{Q(x)\equiv t \mod(d)}(x)}{d^n( n^{1/2}+1+|x|/d)^{n+1}}U_{*,j,d}f(y-x).
\]
An application H\"older's inequality show us that 
\begin{align*}
(|V_{t,d}|*U_{*,j,d}f (y))^p
\< 
& \sup_{t\in Z_d}\left( \sum_{x\in\Z^n}\frac{d \1_{Q(x)\equiv t \mod(d)}(x)}{d^n(n^{1/2}+1+|x|/d)^{(n+1)}}\right)^{p-1} 
\\ & \quad \quad \times\left( \sum_{x\in\Z^n}\frac{d \1_{Q(x)\equiv t \mod(d)}(x)}{d^n(n^{1/2}+1+|x|/d)^{(n+1)}}|U_{*,j,d}f(y-x)|^p\right).
\end{align*}

Our goal is now to find a bound which is uniform  in $t$ for the last expression. 
For the first factor we see that this is at most a constant uniformly in $t$. 
Indeed, 
\begin{align*}
\sup_{t\in Z_d} \sum_{x\in\Z^n}\frac{d \1_{Q(x)\equiv t \mod(d)}(x)}{d^n(n^{1/2}+1+|x|/d)^{(n+1)}} 
& = 
\sup_{t\in Z_d} \sum_{l\in\Z^n}\sum_{r\in[d]^n}\frac{d \1_{Q(r)\equiv t \mod(d)}(r)}{d^n(n^{1/2}+1+|dl+r|/d)^{(n+1)}}
\\ & = 
\sup_{t\in Z_d} \sum_{l\in\Z^n}\sum_{r\in[d]^n}\frac{d^{1-n} \1_{Q(r)\equiv t \mod(d)}(r)}{(1+|l|)^{(n+1)}},
\end{align*}
which is bounded uniformly of $t$ by \eqref{C}. 
Here we used that $n^{1/2}+1+|dl+r|/d\geq n^{1/2}+1+|l|-|r|/d\geq1+|l|$; this explains the presence of the $n^{1/2}$.

The second factor is treated in a similar manner: 
\[
\sum_{x}\frac{d \1_{Q(x)\equiv t \,(d)}}{d^n(n^{1/2}+1+|x|/d)^{(n+1)}}|U_{*,j,d}f(y-x)|^p
\< 
\sum_{l\in\Z^n}\sum_{r\in[d]^n}\frac{d^{1-n} \1_{Q(r)\equiv t \,(d)}}{(1+|l|)^{(n+1)}}|U_{*,j,d}f(y-dl-r)|^p.
\]
We have 
\begin{align*}
\sum_{r\in[d]^n}d^{1-n} \1_{Q(r)\equiv t \mod(d)}|U_*f(y-dl-r)|^p
& \< \sup_{r\in[d]^n} \1_{Q(r)\equiv t \mod(d)}|U_*f(y-dl-r)|^p
\\ & \leq
\sup_{r\in[d]^n} |U_{*,j,d}f(y-dl-r)|^p
\end{align*}
uniformly in $t$, as the initial expression here is essentially an average. 

Now we sum in $y$:
\begin{align*}
\sum_{y\in\Z^n}(|V_{t,d}|*|U_{*,j,d}f| (y))^p 
& \< 
\sum_{y\in\Z^n}\sum_{l\in\Z^n}\frac{1}{(1+|l|)^{(n+1)}} \sup_{r\in[d]^n} |U_{*,j,d}f(y-dl-r)|^p
\\ & = \sum_{y\in\Z^n}\sum_{l\in\Z^n}\frac{1}{(1+|l|)^{(n+1)}} \sup_{r\in[d]^n} |U_{*,j,d}f(y-r)|^p.
\end{align*}

All that remains is to relate the operators 
\[
\sup_{r\in[d]^n} U_{*,j,d}f(y-r)|
\]
to the those treated in Proposition~\ref{real}. 
Recalling that the convolution kernel $U_{\la,j,d}$ has a Fourier multiplier of 
\[
\sum_{\bfa\in Z_d^n} \zeta(10^j(\xi-\bfa/d)) \ds(\la_{l}^{1/k}(\xi-\bfa/d),
\]
which lets us write 
\[
U_{\la,j,d}*f(y-r)=\int_{\Pi^n} u_{\la,j,d}(\xi)e(r\cdot\xi) \widehat{f}(\xi) e(-y\cdot\xi)\,d\xi.
\]
The transference principle of \cite{MSW} (Section 2) reduces the result to Proposition~\ref{real}.
\end{proof}

\section{A Euclidean maximal function}\label{section:Euclidean}

In this section we prove Proposition~\ref{real}. 
To do this, we need the following estimate from \cite{Akos}; see Lemma 6 there.
\begin{lemma}\label{akos}
If $Q$ is a $\psi$-regular form of degree $k$ in $n$ variables, then we have the decay estimate 
\[
|\ds(\xi)| 
\lesssim 
(1+|\xi|)^{-K}
\]
where $K := \frac{1}{2} (\frac{\mathcal{B}(Q)-(k-1)2^{k-1}}{2^{k}}-1)>0$.
\end{lemma}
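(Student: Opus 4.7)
The plan is to follow Magyar's approach in \cite{Akos}, which adapts the oscillatory integral techniques of Birch's treatment of the circle method. First, I would realize $\ds$ as a double oscillatory integral via the Fourier representation of the Dirac delta on $\{Q=1\}$: formally,
\[
\ds(\xi) = \int_{\R} \eof{-\gamma}\, I(\xi,\gamma)\, d\gamma, \qquad I(\xi,\gamma) := \int_{\R^n} \psi(x)\, \eof{\xi\cdot x + \gamma Q(x)}\, dx,
\]
which is rigorously justified by inserting a smooth cutoff $\eta(\gamma/T)$ and passing to the limit using the coarea formula, exactly as in the standard treatment of the singular integral in Birch--Davenport.

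I would then estimate $I(\xi,\gamma)$ in two regimes. When $|\gamma| \leq c|\xi|$ for a sufficiently small constant $c$, the phase $\Phi(x) = \xi\cdot x + \gamma Q(x)$ satisfies $|\nabla\Phi(x)| \gtrsim |\xi|$ uniformly on $\operatorname{supp}(\psi)$, since $\nabla Q$ is bounded there; repeated integration by parts in $x$ then yields $|I(\xi,\gamma)| \lesssim_N (1+|\xi|)^{-N}$ for any $N$. In the complementary regime $|\gamma| > c|\xi|$, I would apply the continuous analogue of Birch's iterated Weyl differencing: writing $|I|^2$ as an integral in a difference variable $h$ and iterating the Cauchy--Schwarz step $k-1$ times reduces the polynomial phase from degree $k$ to degree $1$ in $x$, with the resulting linear coefficient given by a multilinear form in the differences $h_1,\dots,h_{k-1}$ whose complex singular locus has codimension exactly $\mathcal{B}(Q)$. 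Bounding the measure of near-degenerate difference tuples exactly as in Lemma~5.4 of \cite{Bi} (in its oscillatory-integral incarnation) produces a bound of the shape
\[
|I(\xi,\gamma)| \lesssim |\gamma|^{-(\mathcal{B}(Q)-(k-1)2^{k-1})/2^{k}}
\]
up to arbitrary $\epsilon$-losses.

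Finally, I would trade off the two regimes at the cutoff $|\gamma| \asymp |\xi|$ and integrate in $\gamma$, gaining an additional factor of $|\gamma|^{-1/2}$ from one more integration by parts against the oscillatory factor $\eof{-\gamma}$. Combining this with the Birch bound and optimizing at the dyadic cutoff yields precisely the exponent
\[
K = \frac{1}{2}\left(\frac{\mathcal{B}(Q)-(k-1)2^{k-1}}{2^{k}}-1\right)
\]
of decay in $|\xi|$, as claimed. The hard part will be the execution of the iterated differencing for the oscillatory integral $I(\xi,\gamma)$: one must track the multilinear geometry of the successive differences, invoke the codimension bound on the singular locus coming from $\mathcal{B}(Q)$ to control the volume of tuples for which the terminal linear phase degenerates, and verify that all estimates are uniform in $\xi$, not merely in $\gamma$. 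These technicalities, together with the balancing of the two $\gamma$-regimes to extract exactly the exponent $K$, are precisely the content of Lemma~6 in \cite{Akos}.
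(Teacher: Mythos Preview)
The paper does not actually prove this lemma; it simply quotes the estimate from \cite{Akos} with the attribution ``see Lemma~6 there'' and then uses it as a black box in the proof of Proposition~\ref{real}. So there is no proof in the paper to compare your proposal against.

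That said, your outline is a faithful sketch of what Magyar does in \cite{Akos} to obtain this bound: realize $\ds(\xi)$ as the singular integral $\int e(-\gamma)I(\xi,\gamma)\,d\gamma$, use non-stationary phase when $|\gamma|\ll|\xi|$, and for large $|\gamma|$ invoke the oscillatory-integral analogue of Birch's Weyl differencing (Lemma~5.4 of \cite{Bi}) to get decay in $|\gamma|$ governed by $\mathcal{B}(Q)$. You even note at the end that the technical execution is ``precisely the content of Lemma~6 in \cite{Akos}'', which is exactly how the present paper treats it. If you are writing this up for yourself, the sketch is sound; for the purposes of this paper, a citation suffices.
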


In general this estimate is not sharp, but any positive $K$ suffices for our argument. 
For instance, these bounds are readily improved in diagonal situations such as when $Q(x) := x_1^k+\cdots+x_n^k$ where one may take $K := n/k$.

We now continue with the proof of Proposition \ref{real}.
 
\begin{proof}[Proof of Proposition \ref{real}]
Fix $p \in (1,2)$. 
Let $C'$ be the constant chosen so that interpolating $d^n$ at $L^{1+(p-1)/2}$ and $d^{-C'}$ at $L^2$ gives a constant of $d^0=1$ at $L^p$.  More precisely, define $\theta\in[0,1]$ by \[
p^{-1}=(1-\theta)(1+(p-1)/2)+\theta/2\] 
and select $C'$ by the equation  \[
n(1-\theta)  - C'\theta=0.\]
Note that $C'$ does not depend on $d$.

We use an argument similar to that of Lemma~\ref{errors} of Section~\ref{section:reduction}. 
Here we approximate the operators $T_{\la_l,r}$ simply by the unshifted operator $T_{\la_l,0}$. 
Lemma~\ref{akos} paired with  Theorem~A of \cite{DR86} implies that $T^\kappa_{*,0}$ is bounded on $L^p(\R^n)$ for all $p>1$. 
Similarly, for each fixed $r$, we have that $T^\kappa_{*,r}$ is bounded on $L^p(\R^n)$ for all $p>1$. 
 
At $L^{1+(p-1)/2}$ we have 
\begin{align*}
||\sup_{\la_l\geq \kappa}\sup_{r\in[d]^n}|T_{\la_l,r}-T_{\la_l,0}|\,||_{L^{1+(p-1)/2}\to L^{1+(p-1)/2}}
& \< 
\sum_{r\in[d]^n}||\sup_{\la_l\geq \kappa}|T^{\kappa}_{*,r}||_{L^{1+(p-1)/2}\to L^{1+(p-1)/2}}
\\ & \quad \quad 
+ ||T^{\kappa}_{*,0}||_{L^{1+(p-1)/2}\to L^{1+(p-1)/2}}
\\ & \< 
d^n.
\end{align*}
By our choice of $C'$, it suffices for us to show that  
\[
||\sup_{r\in[d]^n}\sup_{\la_l\in I_j;\, \la_l\geq \kappa}|T_{\la_l,r}-T_{\la_l,0}|\,||_{L^{2}\to L^{2}}\<d^{-C'}2^{-\delta j}
\]
for some $\kappa=d^C$.


Notice  first that 
\[
T_{\la_l,0} f(y)-T_{\la_l,r} f(y) 
= 
\int_{\R^n}\widetilde{f}(\xi)\ds(\la^{1/k}\xi)(1-e(-r\cdot\xi)) e(-y\cdot\xi)\,d\xi,
\]
so that 
\begin{align*}
W 
& := 
||\sup_{\la_l\in I_j;\, \la_l\geq \kappa} \sup_{r\in[d]^n}  \int_{\R^n}\widetilde{f}(\xi) \ds(\la^{1/k}\xi) (1-e(-r\cdot\xi)+1)  e(-y\cdot\xi)\,d\xi||^2_{L^2}
\\ & \leq 
\sum_{\la_l\in I_j;\, \la_l\geq \kappa} \sum_{r\in[d]^n} || \int_{\R^n}\widetilde{f}(\xi)\ds(\la^{1/k}\xi)(1-e(-r\cdot\xi)) e(-y\cdot\xi)\,d\xi||^2_{L^2}
\\ & \< 
\sup_{\la_l\in I_j;\, \la_l\geq \kappa} \sum_{r\in[d]^n} || \int_{\R^n}\widetilde{f}(\xi)\ds(\la^{1/k}\xi)(1-e(-r\cdot\xi)) e(-y\cdot\xi)\,d\xi||^2_{L^2}
\end{align*}
as $\la_l\in I_j$ is satisfied by $O(1)$ many $\la_l$.

Fix $\delta=K/4$, where $K$ is given in Lemma \ref{akos}, and set   $N_l=\la_l^{1/k}$ for convenience. 
We have 
\begin{align*}
W 
& = 
\sup_{\la_l\in I_j;\, \la_l\geq \kappa} \sum_{r\in[d]^n} || \int_{\R^n}\widetilde{f}(\xi)\ds(N_l\xi)(1-e(-r\cdot\xi)) e(-y\cdot\xi)\,d\xi||^2_{L^2} 
\\ & \leq 
\sup_{\la_l\in I_j;\, \la_l\geq \kappa} \sum_{r\in[d]^n}  \int_{|\xi|\leq N_l^{-\delta} d^{-C'-n-1}}|\widetilde{f}(\xi)\ds(N_l\xi)(1-e(-r\cdot\xi))|^2\,d\xi
\\ & \quad \quad 
+ \sup_{\la_l\in I_j;\, \la_l\geq \kappa}\sum_{r\in[d]^n}  \int_{|\xi|\geq N_l^{-\delta} d^{-C'-n-1}}|\widetilde{f}(\xi)\ds(N_l\xi)(1-e(-r\cdot\xi))|^2\,d\xi .
\end{align*}
By Plancherel's Theorem we have \[
W\<a+b\]
where  \[
a=\sup_{\la_l\in I_j;\, \la_l\geq \kappa} \sum_{r\in[d]^n} || f||_{L^2}\sup_{|\xi|\leq N_l^{-\delta} d^{-C'-n-1}}|\ds(N_l\xi)(1-e(-r\cdot\xi))|\]
and \[
b=\sup_{\la_l\in I_j;\, \la_l\geq \kappa} \sum_{r\in[d]^n}|| f||_{L^2}\sup_{|\xi|\geq N_l^{-\delta} d^{-C'-n-1}}|\ds(N_l\xi)(1-e(-r\cdot\xi))|\]

For $a$ we have $|\xi|\leq N_l^{-\delta} d^{-C'-n-1} $ and we use the estimates $d\sigma(N_l\xi)\<1$, $|1-e(-r\cdot\xi)|\leq |r| \,|\xi|$, and $|r|\< d$ to get 
\[
a 
\leq 
\sup_{\la_l\in I_j;\, \la_l\geq \kappa} \sum_{r\in[d]^n}N_l^{-\delta} d^{-C'-n-1} \,d 
\lesssim 
d^{-C'}2^{-\delta j}.
\]

For $b$ we use the bounds $|(1-e(-r\cdot\xi))|\leq 2$ and 
\[
|\ds(N_l\xi)|\< (1+N_l|\xi|)^{-K}.
\]
As 
\[
(1+N_l|\xi|)^{-K}\<(N_l|\xi|)^{-K},
\]
we get that when $|\xi|\geq N_l^{-\delta}d^{-C'-n-1}$ we have 
\[
|\ds(N_l\xi)| 
\< 
N_l^{-K+\delta} d^{C'+n+1} 
\leq 
N_l^{-K/4} N_l^{\delta-K/4}d^{C'+n+1}N_l^{-K/2} 
= 
N_l^{-\delta} (d^{C'+n+1}/N_l^{K/2}).
\]
And then 
\[
a 
\< 
\sup_{\la_l\in I_j;\, \la_l\geq \kappa} \sum_{r\in[d]^n} N_l^{-\delta} (d^{C'+n+1}/N_l^{K/2}) 
= 
2^{-\delta j} (d^{C'+2n+1}/N_l^{K/2}).
\]

We now choose  $C=4K^{-1}(2C'+2n+1)$,  so that $\la_l\geq d^C$ implies  $(d^{C'+2n+1}/N_l^{K/2})\leq d^{-C'}$. This completes the proof.
\end{proof}

\section{Proof of Theorem~\ref{theorem:new}}\label{section:interpolation}

Fix $1<p<2$. 
From Section~\ref{section:Mobius} we apply the triangle inequality to find that 
\[
\|M_{*,j}f\|_{\ell^{p}} 
\leq 
\sum_{d=1}^\infty  |C_{j,d}| \, \| \sup_{\la_l}|\mathscr{F}^{-1}(\Omega_{\la_l,j,d}\hat{f})|\|_{\ell^{p}}.
\]
The right hand side is at most
\begin{align*}
\sum_{d=1}^\infty |C_{j,d}| \, \| \sup_{\la_l\geq d^{C(p)}}| \mathscr{F}^{-1}(\Omega_{\la_l,j,d}\hat{f})| \|_{\ell^{p}} 
& + \sum_{d=1}^\infty |C_{j,d}| \, \| \sup_{\la_l\leq d^{C(p)}}|\mathscr{F}^{-1}(\Omega_{\la_l,j,d}\hat{f})|\, \|_{\ell^{p}}
\\ & =: a+b
\end{align*}
where $C(p)$ is the constant appearing in the statement of Proposition~\ref{real}. 

To treat $a$ we use Proposition~\ref{discrete}, which gives 
\[ 
a\< \sum_{d=1}^\infty  |C_{j,d}|\,||f||_{\ell^{p}}.
\]
Summing we find
\[
a\<||f||_{\ell^{p}}\sum_{d=1}^\infty \1_{d\leq2^j}(2^j/d)\<j 2^{j}||f||_{\ell^{p}}.
\] 

For $b$ we see that 
\[
b\leq\sum_{d=1}^\infty  |C_{j,d}|\, \sum_{\la_l\leq d^{C(p)}}||\mathscr{F}^{-1}(\Omega_{\la_l,j,d}\widehat{f}) ||_{\ell^{p}}
\]
For each fixed $\la_l$, $j$ and $d$ we have 
\[
\|\mathscr{F}^{-1}(\Omega_{\la_l,j,d}\widehat{f}) \|_{\ell^{p}} 
\< 
||f||_{\ell^{p}}.
\]
Then 
\[
b \< ||f||_{\ell^{p}} \sum_{d=1}^\infty  |C_{j,d}| \, \#\{\la_l:\la_l\leq d^{C(p)}\}.
\]
From the lacunary assumption on our sequence we get 
\[
\#\{\la_l:\la_l\leq d^{C(p)}\}\< C(p)\log\,d,
\]
which gives 
\[
b\<||f||_{\ell^{p}} \sum_{d=1}^\infty  |C_{j,d}| \log\,d\< ||f||_{\ell^{p}} \,j\,\sum_{d=1}^\infty  |C_{j,d}| \<j^2 2^{j}||f||_{\ell^{p}}.
\]
This completes the proof of Theorem~\ref{theorem:new}.

\section{Probabilistic counterexamples}

\renewcommand{\vector}[1]{{#1}}

In this section we generalize Zienkiewicz's probabilistic counterexamples to prove Theorem~\ref{theorem:lowerbound}. Throughout this section fix our form $\form \in \Z[x_1,\dots,x_n]$ to be a homogeneous, positive definite form of degree $k \geq 2$ satisfying the Birch rank condition $\mathcal{B}(Q)>(k-1)2^k$. 
We will leave the extension to $\psi$-regular forms to the reader. 

We will use the following asymptotic known as \emph{the Lipschitz principle} in \cite{Davenport:Lipschitz}: 
\begin{equation}\label{equation:Lipschitz}
\# \{ \vector{x} \in \Z^n : \form(\vector{x}) \leq R \} 
= 
C_{\form} R^{n/k} + O_{\form}(R^{n/k-1})
\end{equation}
for all sufficiently large $R \gg 1$. 
The implicit constants in \eqref{equation:Lipschitz} only depend on the form. 
For instance $C_\form$ is the volume of the body $\{ \form(\vector{x}) \leq 1 \}$ while the other implied constant depends on the $n-1$-dimensional volume of $\{ \form(\vector{x}) = 1 \}$. 
This is more precise than the following which was a corollary of the main theorem in \cite{Bi}: 
\[
\# \{ \vector{x} \in \Z^n : \form(\vector{x}) \leq R \} 
\eqsim 
R^{n/k}.
\]
Moreover, We have the following version for congruence classes: for fixed $q$ and $b \in Z_q$
\begin{equation}
\# \{ \vector{x} \in \Z^n : \form(\vector{x}) \leq R \; \text{and} \; \vector{x} \equiv \vector{b} \mod q \} 
= 
C_{\form} q^{-n} R^{n/k} + O_{\form}(R^{n/k-1}).
\end{equation}
for all $R>0$ sufficiently large with respect to $q$. 
It is important that $C_\form$ is the same as in \eqref{equation:Lipschitz}. 

Since $\form$ is positive definite and $\{ x \in \R^n : Q(x) - \lambda = 0 \}$ for $\lambda>0$ defines an affine algebraic set, $\{ x \in \R^n : Q(x) - \lambda = 0 \}$ satisfies the conditions stated in \cite{Davenport:Lipschitz} sufficient for \eqref{equation:Lipschitz} to hold asymptotically.
We will use an analogous statement for the hypersurfaces of $\form$. 
\begin{prop}\label{proposition:weakLipschitz}
For each $J \in \N$ there exists an $R_0(J)>0$ sufficiently large such that for each $\vector{b} \in Z_{\oddprime^J}^n$ and $R>R_0$ there exists a $\lambda(\vector{b}) \in [R,2R) \cap \N$ such that 
\begin{equation}\label{asymptotic:Z}
\# \{ \vector{x} \in \Z^n : \form(\vector{x}) = \lambda \; \text{and} \; \vector{x} \equiv \vector{n} \mod \oddprime^J \}
\gtrsim 
\oddprime^{-J(n-1)} \lambda^{n/k-1}.
\end{equation}
\end{prop}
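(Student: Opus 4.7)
The plan is to apply the Lipschitz principle for congruence classes to a dyadic window $[R,2R)$ and then exploit the arithmetic observation that the condition $\vector{x} \equiv \vector{b} \mod \oddprime^J$ forces $\form(\vector{x})$ to lie in a single residue class modulo $\oddprime^J$. The desired $\lambda(\vector{b})$ will then drop out of the pigeonhole principle.

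First I would subtract the Davenport--Lipschitz asymptotic for congruence classes at levels $R$ and $2R$ to obtain
\[
\#\{\vector{x} \in \Z^n : R \leq \form(\vector{x}) < 2R, \; \vector{x} \equiv \vector{b} \!\! \mod \oddprime^J\} = C_{\form}(2^{n/k}-1) \, \oddprime^{-Jn} R^{n/k} + O_{\form}(R^{n/k-1}).
\]
Choosing $R_0(J)$ to be a sufficiently large constant multiple of $\oddprime^{2Jn}$ forces the main term to dominate the error term for all $R \geq R_0(J)$, yielding the dyadic lower bound $\gtrsim \oddprime^{-Jn} R^{n/k}$.

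The key gain now is the arithmetic observation that $\vector{x} \equiv \vector{b} \mod \oddprime^J$ entails $\form(\vector{x}) \equiv \form(\vector{b}) \mod \oddprime^J$ since $\form$ has integer coefficients. Therefore the count above is supported on those $\lambda \in [R,2R) \cap \N$ lying in the single residue class $\form(\vector{b}) \!\! \mod \oddprime^J$, of which there are at most $O(R \oddprime^{-J})$. Pigeonholing over this restricted set of $\lambda$ produces some $\lambda(\vector{b}) \in [R,2R) \cap \N$ with
\[
\#\{\vector{x} \in \Z^n : \form(\vector{x}) = \lambda(\vector{b}), \; \vector{x} \equiv \vector{b} \!\! \mod \oddprime^J\} \gtrsim \frac{\oddprime^{-Jn} R^{n/k}}{R\, \oddprime^{-J}} = \oddprime^{-J(n-1)} R^{n/k-1},
\]
which is comparable to $\oddprime^{-J(n-1)} \lambda(\vector{b})^{n/k-1}$ since $\lambda(\vector{b}) \eqsim R$.

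The only mild obstacle is ensuring that the implicit constant in the Davenport--Lipschitz error is uniform in the residue class $\vector{b}$. This is standard: translating $\vector{x} \mapsto \vector{x} - \vector{b}$ reduces the count for a general $\vector{b}$ to the count for $\vector{b} = \vector{0}$ on a translated body whose principal curvature data agree with those of $\{\form \leq R\}$, and Davenport's bound depends only on this geometric data, i.e. only on $\form$. Thus $R_0(J)$ can be chosen depending only on $J$ and $\form$, as required.
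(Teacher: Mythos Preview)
Your proof is correct and follows essentially the same route as the paper: apply the Davenport--Lipschitz asymptotic in the congruence class to the dyadic window $[R,2R)$, observe that $\vector{x}\equiv\vector{b}\pmod{\oddprime^J}$ forces $\form(\vector{x})\equiv\form(\vector{b})\pmod{\oddprime^J}$ so that only $O(R\oddprime^{-J})$ values of $\lambda$ can contribute, and pigeonhole. Your version is slightly more explicit about the choice of $R_0(J)$ and about uniformity of the error term in $\vector{b}$; one small remark is that Davenport's bound is stated for regions cut out by algebraic hypersurfaces rather than via curvature, so the uniformity in $\vector{b}$ is more safely justified by noting that translation preserves the number and degree of defining equations, which are the only inputs to Davenport's error term.
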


\begin{proof}
Fix a choice of $\vector{b} \in Z_{\oddprime^J}^n$. 
From \eqref{equation:Lipschitz} we have 
\begin{align*}
\oddprime^{-Jn} R^{n/k} 
& \eqsim 
\# \{ \vector{m} \in \Z^n : \vector{m} \equiv \vector{b} \; \text{and} \; \form(\vector{m}) \in [R,2R) \} 
\\ & = 
\sum_{\lambda \in [R,2R), \lambda \equiv \form(\vector{b}) \mod \oddprime^J} \# \{ \vector{m} \in \Z^n : \form(\vector{m}) = \lambda \; \text{and} \; \vector{m} \equiv \vector{b} \mod \oddprime^J \} 
\\ & \leq 
(R/\oddprime^J) \max_{\lambda \in [R,2R), \lambda \equiv \form(\vector{n}) \mod \oddprime^J} \# \{ \vector{m} \in \Z^n : \form(\vector{m}) = \lambda \; \text{and} \; \vector{m} \equiv \vector{n} \mod \oddprime^J \}. 
\end{align*}
Therefore, 
\[
\max_{\lambda \in [R,2R), \lambda \equiv \form(\vector{n}) \mod \oddprime^J} \# \{ \vector{m} \in \Z^n : \form(\vector{m}) = \lambda \; \text{and} \; \vector{m} \equiv \vector{n} \mod \oddprime^J \} 
\gtrsim 
\oddprime^{-J(n-1)} R^{n/k-1}.
\]
\end{proof}

%
%

We now give the proof of Theorem~\ref{theorem:lowerbound}. 

\begin{proof}[Proof of Theorem~\ref{theorem:lowerbound}]
Fix $\oddprime$ to be an odd prime. 
We will probabilistically construct our lacunary sequence $\LL \subset \N$ by making use of \eqref{asymptotic:Z}. 
For each $J \in \N$, choose $K(J)$ so that Proposition~\ref{proposition:weakLipschitz} holds with $R_0(J) := 2^{K(J)}$. 
Order the vectors in $Z_{\oddprime^J}^n$ as $\vector{b}^i$ for $i=1, \dots \oddprime^{Jd}$. 
Choose $s_{J,i} := \lambda(\vector{b}^i) \in [2^{K(J)+J^2+i-1},2^{K(J)+J^2+i})$ such that \eqref{asymptotic:Z} holds for $\vector{b}^i$. 
For each $J\in \N$ we now have a finite sequence $\lambda_1, \dots, \lambda_{\oddprime^J}$. 
Order the set of all $s_{J,i}$ lexicographically (e.g. in the first index and then the second index) to form the sequence $\LL$ which we also write as $\{\lambda_j : j \in \N\}$. 
Under this lexicographic ordering if $(J,i) < (J',i')$, then by construction $2s_{J,i} < s_{J',i'}$. 
Consequently $\lambda_{j+1} > 2\lambda_j$ for each $\lambda_j \in \LL$ so that we indeed have a lacunary sequence. 

Fixing $J$ for the moment, for each $T \in \N$, we take our function to be $f_T := {\bf 1}_{(\oddprime^J\Z \cap [-T,T])^n}$. 
For $\vector{x} \in [-T/10^n,T/10^n]^n$, we write $\vector{x} = \oddprime^J\vector{y}+\vector{b}$ for some $\vector{y} \in \Z^n$ and some $\vector{b} \in [\oddprime^J]^n$ which we identify with $Z_{\oddprime^J}^n$. 
Then for $\lambda_j$ compatible with $\vector{b}$ by the above construction, we have 
\[
A_{\lambda_j} f_T(\vector{x})
= 
\lambda_j^{1-\frac{n}{d}} \sum_{\vector{z} \in S_\lambda} {\bf 1}_{(\oddprime^J\Z \cap [-T,T])^n}(\vector{x}-\vector{z})
= 
\lambda_j^{1-\frac{n}{d}} \sum_{\vector{z} \in S_\lambda} {\bf 1}_{(\oddprime^J\Z)^n+\vector{b} \cap [-T,T]^n}(\vector{z})
\]
provided that $T$ is sufficiently large with respect to $\oddprime^J$, e.g. $T > 10^{10n} \oddprime^J$. 
The right hand side of the above is 
\[
\lambda_j^{1-\frac{n}{d}} \# \{ \vector{m} \in \Z^n : \form(\vector{m}) = \lambda \; \text{and} \; \vector{m} \equiv \vector{b} \mod \oddprime^J \} 
\gtrsim 
\oddprime^{-J(n-1)}
\]
by Proposition~\ref{proposition:weakLipschitz}. 
Therefore, for $T$ sufficiently large depending on $J$ we have 
\[
\sum_{\vector{x} \in [-T/10^n,T/10^n]} \sup_{j \in \N} |A_{\lambda_j} f_T(\vector{x})|^p 
\gtrsim 
\sum_{\vector{x} \in [-T/10^n,T/10^n]} \oddprime^{-J(n-1)p} 
= 
T^n \oddprime^{-J(n-1)p} .
\]
Comparing this to $\|f_T\|_{\ell^p(\Z^n)}^p \eqsim (T/\oddprime^J)^n$, we see that if $p < \frac{n}{n-1}$, then the $\ell^p(\Z^n)$ operator norm for the lacunary maximal function satisfies the lower bound
\[
\| \sup_{\lambda_j \in \LL} |A_{\lambda_j}| \|_{\ell^p \to \ell^p} 
\gtrsim 
\oddprime^{\frac{-J(n-1)p+Jn}{p}} 
= 
\oddprime^{J(\frac{n}{p}-[n-1])}.
\] 
This is unbounded on $\ell^p(\Z^n)$ for $p<\frac{n}{n-1}$ as $J$ goes to infinity. 
\end{proof}

\begin{rem}
It is important to note that in the above analysis one may instead choose a sequence which grows faster than lacunary, e.g. $\lambda_j \in \N$ such that it grows exponentially like $\lambda_{j+1} > \lambda_j^\eta$ for any $\eta>0$, or worse yet super-exponentially like $\lambda_{j+1} > c^{\lambda_j}$ for some $c>1$, for which the associated maximal function is unbounded near $\ell^1(\Z^n)$. 
In particular, the results of \cite{Cook:sparse} and \cite{Hughes:sparse} are non-trivial despite being restricted to such quickly growing sequences. 
\end{rem}

\bibliographystyle{amsalpha} 

\begin{thebibliography}{99}


\bibitem[ACHK18]{ACHK}
T.~Anderson, B.~Cook, K.~Hughes and A.~Kumchev, \emph{Improved $l^p$ boundedness for integral k-spherical maximal functions}, Disc. Anal. (2018), doi:10.19086/da.3675.

\bibitem[Bir61]{Bi}
{\sc B.J. Birch},
 {\it Forms in many variables}, Proc. Royal Soc. London, Series A, Math. and Phy. Sci. 265, (1962), 245-263


\bibitem[Bou86]{Bourgain:circular} {J. Bourgain},
 {\it Averages in the plane over convex curves and maximal operators}, J. Anal. Math. 47 (1986), 69--85.

\bibitem[Bou88]{Bourgain:squares}
\bysame, \emph{On the maximal ergodic theorem for certain subsets of the integers}, Israel J. Math. \textbf{61} (1988), no.~1, 39--72.
  
\bibitem[Bou89]{Boup>1} 
\bysame, \emph{Pointwise ergodic theorems for arithmetic sets}, Publ. I.H.E.S. 69, (1989), 5--45.


\bibitem[BM10]{BM_divergent_squares}
Z.~Buczolich and R.~D. Mauldin. \emph{Divergent square averages}.
Ann. of Math. (2), 171 (3): 1479--1530, 2010.
ISSN 0003-486X.
\doi{10.4007/annals.2010.171.1479}.
URL \url{https://doi.org/10.4007/annals.2010.171.1479}.

\bibitem[Cal79]{Calderon}
C.~P. Calder\'on, \emph{Lacunary spherical means}, Illinois Journal of Mathematics \textbf{23} (1979), no.~3, 476--484.

\bibitem[Chr88]{Christ_weak11}
M. Christ. \emph{Weak type {$(1,1)$} bounds for rough operators}.
{Ann. of Math. (2)}, 128 (1): 19--42, 1988.
ISSN 0003-486X.
\doi{10.2307/1971461}.
URL \url{https://doi.org/10.2307/1971461}.

\bibitem[Chr11]{Christ_weak_discrete}
M. Christ. \emph{A weak type (1,1) inequality for maximal averages over certain
  sparse sequences}. {arXiv e-prints}, art. arXiv:1108.5664, Aug 2011.

\bibitem[CK17]{CK_lacunary}
L.~Cladek and B.~Krause. \emph{Improved endpoint bounds for the lacunary spherical maximal
  operator}. \emph{arXiv e-prints}, art. arXiv:1703.01508, Mar 2017.

\bibitem[CW78]{CW}
R.~R. Coifman and Guido Weiss, \emph{Review: R.~E. Edwards and G.~I. Gaudry, Littlewood-Paley and multiplier theory}, Bulletin of the American Mathematical Society \textbf{84} (1978), no.~2, 242--250.

\bibitem[Cook19a]{Cook:Birch}
{B. Cook},
 {\it Maximal function inequalities and a theorem of {B}irch},   Israel J. Math., to appear

\bibitem[Cook19b]{Cook:sparse}
\bysame, \emph{A note on discrete spherical averages over sparse sequences}, arXiv pre-print: \href{https://arxiv.org/abs/1808.03822}{https://arxiv.org/abs/1808.03822}.

\bibitem[Dav51]{Davenport:Lipschitz}
H.~Davenport, \emph{On a principle of Lipschitz}, J. London Math. Soc. 26, (1951). 179–183.


\bibitem[DR86]{DR86} 
{J. Duoandikoetxea  and J. L. Rubio de Francia,} \emph{Maximal and singular integral operators via Fourier transform estimates}, Invent. Math. 84, (1986), 541-561.

\bibitem[Hug12]{Hughes:thesis}
K.~Hughes, \emph{Problems and results related to waring's problem: Maximal functions and ergodic averages}, Ph.D. thesis, Princeton University, September 2012.

\bibitem[Hug14]{Hughes:sparse}
\bysame, \emph{The discrete spherical averages over a family of sparse sequences}, 
to appear in J. Anal. Math., arXiv pre-print:
\href{http://arxiv.org/abs/1609.04313}{http://arxiv.org/abs/1609.04313}.

\bibitem[Hug17]{Hughes:restricted}
\bysame, \emph{Restricted weak-type endpoint estimates for k-spherical maximal functions}, Math. Z. \textbf{286} (2017), no.~3-4, 1303--1321. \MR{3671577}

\bibitem[Hug18]{Hughes:improving}
\bysame, \emph{$\ell^p$-improving for discrete spherical averages}, arXiv pre-print: \href{https://arxiv.org/abs/1804.09260}{https://arxiv.org/abs/1804.09260}.

\bibitem[HWZ]{HWZ}
K.~Hughes, J.~Wright and J.Zienkiewicz,  \emph{An ergodic Hasse principle}, pre-print.

\bibitem[Ion04]{Ionescu}
A.~D. Ionescu, \emph{An endpoint estimate for the discrete spherical maximal function}, Proc. Amer. Math. Soc. \textbf{132} (2004), no.~5, 1411--1417 (electronic).

\bibitem[KL18]{Kesler}
R.~Kesler, M.~T. Lacey, \emph{$\ell^p$-improving inequalities for discrete spherical averages}, arXiv pre-print: \href{https://arxiv.org/abs/1804.09845}{https://arxiv.org/abs/1804.09845}.

\bibitem[KLM18]{KLM}
{R. Kesler, M. T. Lacey, and D. Mena Arias},
 {\it Lacunary Discrete Spherical Maximal Functions}, \href{https://arxiv.org/abs/1810.12344}{https://arxiv.org/abs/1810.12344}.

\bibitem[LaV11]{LaVictoire_universally_bad}
P. LaVictoire. \emph{Universally {$L^1$}-bad arithmetic sequences}. {J. Anal. Math.}, 113:\penalty0 241--263, 2011.
 ISSN 0021-7670.
 \doi{10.1007/s11854-011-0006-y}.
 URL \url{https://doi.org/10.1007/s11854-011-0006-y}.

\bibitem[Mag02]{Akos} {\'A. Magyar, }{\em Diophantine equations and ergodic theorems}, Amer. J. Math. 124 (5), (2002), 921-953.

\bibitem[Mag97]{Mag} {\bysame, }{\em $L^p$-bounds for spherical maximal operators on $\Z^n$}, Rev. Mat. Iberoam. 13, (1997), 307--317.

\bibitem[Mag07]{Magyar:distribution}
\bysame, \emph{On the distribution of lattice points on spheres and level surfaces of polynomials}, J. Number Theory \textbf{122} (2007), no.~1, 69--83. \MR{2287111 (2007m:11104)}

\bibitem[Mir15]{Mirek_weak11}
M. Mirek, \emph{Weak type {$(1,1)$} inequalities for discrete rough maximal
  functions}. {J. Anal. Math.}, 127:\penalty0 247--281, 2015. 
 ISSN 0021-7670.
 \doi{10.1007/s11854-015-0030-4}.
 URL \url{https://doi.org/10.1007/s11854-015-0030-4}.

\bibitem[MSW02]{MSW} {\'A. Magyar, E. Stein, and S. Wainger, }{\em Discrete analogues in harmonic analysis: spherical averages}, Ann. of Math. 155, (2002): 189-208.


\bibitem[STW03]{STW_pointwise_lacunary}
A.~Seeger, T.~Tao, and J.~Wright.
 Pointwise convergence of lacunary spherical means.
 In \emph{Harmonic analysis at {M}ount {H}olyoke ({S}outh {H}adley,
  {MA}, 2001)}, volume 320 of \emph{Contemp. Math.}, pages 341--351. Amer.
  Math. Soc., Providence, RI, 2003.
 \doi
{10.1090/conm/320/05617}.
 URL \url{http://dx.doi.org/10.1090/conm/320/05617}.

\bibitem[Ste77]{Steckin} 
S.~B. Ste{\v{c}}kin, \emph{An estimate of a complete rational trigonometric sum.} 
{\em Trudy Mat. Inst. Steklov.}, 143:188--207, 211, 1977.
Analytic number theory, mathematical analysis and their applications (dedicated to I. M. Vinogradov on his 85th birthday).

\bibitem[Ste76]{Stein:spherical} 
E.~M. Stein. \emph{Maximal functions. I. Spherical means}, Proceedings of the National Academy of Sciences of the United States of America 73 (1976), 2174–2175.

\bibitem[UZ07]{UZ}
R. Urban and J. Zienkiewicz. \emph{Weak type {$(1,1)$} estimates for a class of discrete rough maximal functions}.
 {Math. Res. Lett.}, 14\penalty0 (2):\penalty0 227--237, 2007.
 ISSN 1073-2780.
 \doi{10.4310/MRL.2007.v14.n2.a6}.
 URL \url{https://doi.org/10.4310/MRL.2007.v14.n2.a6}.

\bibitem[Zie14]{Zienkiewicz} 
J. Zienkiewicz, \emph{Personal communication}, 2014.


\end{thebibliography}

\end{document}